\documentclass[11pt, reqno]{amsart}
\usepackage{amsmath,amsthm,amssymb,mathrsfs}

\usepackage[abbrev,non-sorted-cites]{amsrefs}
\usepackage{hyperref}

\usepackage{color,graphicx}
\usepackage{verbatim}
\usepackage{datetime}

\theoremstyle{plain}
  \newtheorem{thm}{Theorem}[section]
  \newtheorem{lem}[thm]{Lemma}
  \newtheorem{prop}[thm]{Proposition}
  \newtheorem{cor}[thm]{Corollary} 
\theoremstyle{definition}
  \newtheorem{defn}[thm]{Definition}
	
  \newtheorem{rmk}[thm]{Remark}
  \newtheorem*{ack*}{Acknowledgements}
  \newtheorem*{ques*}{Question}
\theoremstyle{plain}

\numberwithin{equation}{section}

\newcommand\ip[2]{\langle{#1},{#2}\rangle}

\newcommand\pl{\partial}

\newcommand\w{\wedge}

\newcommand\dt{\delta}

\newcommand\vep{\varepsilon}
\newcommand\vph{\varphi}
\newcommand\om{\omega}
\newcommand\ta{\theta}

\newcommand\af{\alpha}
\newcommand\bt{\beta}

\newcommand\ld{\lambda}

\newcommand\Om{\Omega}
\newcommand\Sm{\Sigma}

\newcommand\Ld{\Lambda}
\newcommand\Ta{\Theta}
\newcommand\Dt{\Delta}

\newcommand\CI{\mathcal{I}}

\newcommand\RO{\mathrm{O}}
\newcommand\RSO{\mathrm{SO}}

\newcommand\RGr{\mathrm{Gr}}

\newcommand\BR{\mathbb{R}}

\newcommand\ol{\overline}

\newcommand\ot{\otimes}
\newcommand\op{\oplus}

\newcommand\bx{\mathbf{x}}

\newcommand\bfI{\mathbf{I}}

\newcommand\dd{{\mathrm d}}

\newcommand\sff{\mathsf{f}}
\newcommand\rankdf{\mathsf{r}}
\newcommand\matrixG{{G}}
\newcommand\matrixH{{H}}

\DeclareMathOperator{\tr}{tr}

\DeclareMathOperator{\rank}{rank}

\DeclareMathOperator{\spn}{span}
\DeclareMathOperator{\sgn}{sign}
\DeclareMathOperator{\spt}{spt}

\DeclareMathOperator{\esssup}{ess\,sup}

\begin{document}

\title{Calibrating Forms for Minimal Graphs\\in Arbitrary Codimension}
\subjclass{Primary 53C38, Secondary 53A10, 35J60, 49Q05, 49Q15}

\author{Chung-Jun Tsai}
\address{Department of Mathematics, National Taiwan University, and National Center for Theoretical Sciences, Math Division, Taipei 10617, Taiwan}
\email{cjtsai@ntu.edu.tw}

\author{Mu-Tao Wang}
\address{Department of Mathematics, Columbia University, New York, NY 10027, USA}
\email{mtwang@math.columbia.edu}



\thanks{C.-J.~Tsai is supported in part by the National Science and Technology Council grant 112-2628-M-002-004-MY4.  M.-T.~Wang is supported in part by the National Science Foundation under Grants DMS-2104212 and DMS-2404945, and by the Simons Foundation through the Simons Fellowship SFI-MPS-SFM-00006056.  This material is based upon work supported by the National Science Foundation under Grant No.~DMS-1928930, while M.-T.~Wang was in residence at the Simons Laufer Mathematical Sciences Institute (formerly MSRI) in Berkeley, California, during the Fall 2024 semester and C.-J.~Tsai visited during this period. Part of this work was carried out when M.-T.~Wang was visiting the Institute of Mathematics, Academia Sinica.
}

\begin{abstract}
We introduce a new family of closed differential forms naturally associated with minimal graphical submanifolds in Euclidean space, defined in arbitrary codimension. For each minimal graph, we construct an explicit closed form whose restriction coincides with the induced volume form. These forms admit a geometric interpretation as pullbacks, via the Gauss map, of tautological differential forms on the Grassmannian. In contrast to most known calibrations, they are generally not parallel and do not arise from special holonomy or symmetry considerations. The calibration problem is thus reduced to estimating the pointwise comass of the constructed forms. We show that the comass bound can be characterized in terms of explicit inequalities involving the singular values of the defining map of the graph, formulated via its two-dilations and we identify precise conditions ensuring that the comass is at most one. As a consequence, any minimal graph satisfying these conditions is calibrated and hence area-minimizing. This yields a broad class of new calibrated minimal graphs, extending the classical codimension-one theory, and provides an effective criterion for determining precisely where a given minimal graph is area-minimizing. As an application of our construction, we confirm a conjecture of Lawson and Osserman under two-dilation conditions, in arbitrary codimesnion. 
\end{abstract}

\maketitle


\section{Introduction}

A central problem in geometric analysis is to determine when a minimal submanifold, which is stationary for the area functional, is in fact \emph{area-minimizing} in its homology class.  Among the available tools, the theory of calibrations provides a particularly powerful and conceptually simple mechanism: the existence of a closed differential form whose pointwise comass is bounded by one and which restricts to the induced volume form on the submanifold immediately implies the area-minimizing property. Since the foundational work of Harvey and Lawson \cite{Harvey-Lawson-82}, calibrated geometry has played a key role in the study of minimal submanifolds, geometric measure theory, and nonlinear elliptic PDE.

Most classical examples of calibrations arise from \emph{parallel differential forms}, typically associated with special holonomy structures \cite{Harvey-Lawson-82}, or from highly symmetric constructions such as Lawlor’s angle criterion for cones \cites{Lawlor-89, Lawlor-91}. While these examples are geometrically rich, they apply only to relatively rigid classes of minimal submanifolds. In contrast, \emph{minimal graphs} in Euclidean space form a vast and flexible family, governed by nonlinear elliptic systems, yet comparatively few general calibration constructions are available for them. Even in Euclidean space, minimality alone does not imply that the submanifold is area-minimizing, and explicit calibrations for minimal graphs are known primarily in codimension one or in special integrable settings such as special Lagrangian geometry.

In codimension one, a well-known construction (see for example \cite{CM-11}*{Chapter 1}) associates to a minimal hypersurface a closed \((n-1)\)-form obtained from a divergence-free vector field that coincides with the unit normal along the hypersurface. In this setting, the area-minimizing property reduces to a pointwise bound on the magnitude of the vector field, which can be verified under explicit geometric conditions on the graph. Beyond codimension one, however, no comparable general mechanism is known.

The purpose of this paper is to introduce a \emph{new family of closed differential forms naturally associated with minimal graphical submanifolds in Euclidean space}, valid in arbitrary codimension. Given a minimal graph, we construct an explicit differential form that is closed and whose restriction to the graph coincides with the induced volume form. These forms may be interpreted as the pull-back, via the Gauss map, of certain tautological differential forms on the Grassmannian. In general, they are \emph{not parallel} and do not arise from special holonomy or symmetry considerations. Therefore, the calibration problem reduces entirely to estimating the pointwise comass.

A key feature of our approach is that the comass bound can be characterized in terms of explicit inequalities on the singular values of the defining map of the graph. We identify precise conditions on these singular values that ensure the comass of the constructed form is less than or equal to one. As a consequence, any minimal graph satisfying these inequalities is calibrated by our form and therefore area-minimizing. This yields a broad class of new calibrated minimal graphs, encompassing and extending the classical codimension-one construction.


We first recall the definitions of calibrations and calibrated submanifolds. 

\begin{defn}
    A differential form $\Theta$ on Euclidean space is called a calibration/calibrating form if it is closed, $\dd\Theta=0$, and has comass at most one. 
\end{defn}

The definition of comass will be recalled in Section~\ref{sec:comass}.

\begin{defn}
    An oriented submanifold $\Sigma$ of Euclidean space is said to be \emph{calibrated} (by \(\Theta\)) if there exists a calibration \(\Theta\) such that \[ \Theta|_{\Sigma} = \operatorname{vol}_{\Sigma}~,\] where $\operatorname{vol}_{\Sigma}$ denotes the volume form induced by the orientation.
\end{defn}

A calibrated submanifold is area-minimizing in its homology class \cite{Harvey-Lawson-82}.  The primary objects of interest in this work are graphical minimal submanifolds/minimal graphs,  defined below. 

\begin{defn}
Let $n, m \geq 1$ be integers. Let $\Omega\subset\mathbb{R}^{n}$ be a domain and $
F:\Omega\to\mathbb{R}^m$
be a smooth map. The graph of $F$ is the subset
\[
\Sigma \;=\;\bigl\{(x,F(x)) : x\in\Omega \bigr\}
\;\subset\;\mathbb{R}^{n}\times\mathbb{R}^m=\mathbb{R}^{n+m}.
\]
We say that $\Sigma$ is a \emph{minimal graph of dimension $n$} and \emph{codimension $m$}
if its mean curvature vanishes identically. Equivalently, $\Sigma$ is an $n$-dimensional minimal submanifold of $\mathbb{R}^{n+m}$.
\end{defn}

We first state our codimension-two theorem below. 
\begin{thm}
Let $F:\Omega\subset \mathbb{R}^n\rightarrow \mathbb{R}^2$ be a smooth map whose
graph is a minimal submanifold of $\mathbb{R}^{n+2}$. Suppose that $F$ is
area-non-increasing, i.e.\ its \(2\)-dilation is no greater than one:
\begin{equation}\label{area-dec-2}\lambda_1\lambda_2\leq 1 \end{equation} where $\{\lambda_i\}_{i=1,2}$ are the singular values of $\dd F$. Then the graph of $F$ is calibrated by $\Theta(F)$, defined in Definition~\ref{defn:pre-calibration}
\end{thm}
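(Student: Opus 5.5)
The plan is to reduce the statement to a pointwise comass estimate, using the structure of $\Theta(F)$ from Definition~\ref{defn:pre-calibration}. That definition is not visible here, so my description of $\Theta(F)$ below is an assumption. I take $\Theta(F)$ to be the pull-back, via the Gauss map $x\mapsto T_{(x,F(x))}\Sigma$, of a tautological $n$-form on the Grassmannian, extended to $\Omega\times\mathbb{R}^2$ independently of $y$. Its coefficients in the basis $\dd x^I\wedge \dd y^A$ are then, up to signs, the minors of $\dd F$ divided by $\sqrt{\det(I+\dd F^T\dd F)}$. Two of the three calibration properties are then taken as given from the paper, and only the comass bound remains.

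The first is the restriction property: $\Theta(F)|_\Sigma=\operatorname{vol}_\Sigma$. This should be immediate, since on $\Sigma$ only the terms aligned with the tangent plane $\spn\{e_i+\partial_iF\}$ survive, and the normalization is chosen to give exactly $\operatorname{vol}_\Sigma$. The second is closedness. Because the coefficients do not depend on $y$, $\dd\Theta(F)=0$ reduces to divergence identities in $x$ for the minors of $\dd F$ weighted by $\sqrt{g}\,g^{ij}$. These identities should follow from the minimal surface system $\partial_i(\sqrt g\,g^{ij})=0$ and $\partial_i(\sqrt g\,g^{ij}\partial_jF^\alpha)=0$, together with Plücker/Cramer-type identities for minors. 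I will treat both facts as established by the construction preceding Definition~\ref{defn:pre-calibration}.

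The comass is a pointwise, $\RO(n)\times\RO(2)$-invariant quantity, so at each point I choose singular value decompositions. I take orthonormal $e_1,\dots,e_n$ of $\mathbb{R}^n$ and $f_1,f_2$ of $\mathbb{R}^2$ with $\dd F(e_1)=\lambda_1f_1$, $\dd F(e_2)=\lambda_2 f_2$, and $\dd F(e_k)=0$ for $k\ge3$. In this frame $\Theta(F)$ should take the form $\eta\wedge \dd x^3\wedge\cdots\wedge\dd x^n$. Here $\eta$ is a $2$-form on $\mathbb{R}^4=\spn\{e_1,e_2,f_1,f_2\}$ whose coefficients on $\dd x^1\wedge\dd x^2$, $\dd y^1\wedge\dd x^2$, $\dd x^1\wedge\dd y^2$, $\dd y^1\wedge\dd y^2$ are explicit functions of $\lambda_1,\lambda_2$.

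This cannot simply be the decomposable form $(\dd x^1+\lambda_1\dd y^1)\wedge(\dd x^2+\lambda_2\dd y^2)$ over its norm. That form has comass one trivially, but the naive $y$-invariant extension is not closed in general. So I expect $\eta$ to carry modified signs or weights, for instance on the $\dd y^1\wedge\dd y^2$ term, while still having $\eta(e_1+\lambda_1f_1,e_2+\lambda_2f_2)=\sqrt{(1+\lambda_1^2)(1+\lambda_2^2)}$.

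The comass computation then proceeds in two steps.

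1. Since the $\dd x^k$ with $k\ge3$ act on directions orthogonal to $\mathbb{R}^4$ in a split way, I show that the comass of $\eta\wedge \dd x^3\wedge\cdots\wedge\dd x^n$ equals the comass of $\eta$. One argument uses the first cousin principle or the standard fact for products with orthogonal unit simple forms; another applies the Wirtinger-type inequality directly.

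2. For a $2$-form on $\mathbb{R}^4$, I bring $\eta$ to the normal form $\mu_1\,\epsilon^1\wedge\epsilon^2+\mu_2\,\epsilon^3\wedge\epsilon^4$. Its comass is $\max(|\mu_1|,|\mu_2|)$. Equivalently, writing $\eta=\eta^++\eta^-$ in self-dual and anti-self-dual parts, the comass is $(|\eta^+|+|\eta^-|)/\sqrt2$. I compute $|\eta^\pm|$ explicitly from the four coefficients.

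It remains to check that the resulting inequality $\max|\mu_i|\le1$ is equivalent, after clearing $\sqrt{(1+\lambda_1^2)(1+\lambda_2^2)}$, to $\lambda_1\lambda_2\le1$. Heuristically, $|\mu_1|=1$ corresponds to the tangent plane and the second eigenvalue is governed by $|\lambda_1\lambda_2|$.

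The main obstacle is the last step: identifying the exact coefficients of $\eta$ from Definition~\ref{defn:pre-calibration}, and showing that the second normal-form eigenvalue is bounded by $1$ exactly when $\lambda_1\lambda_2\le 1$. I expect it to reduce to an inequality of the type $(1-\lambda_1\lambda_2)^2\ge0$ or $(\lambda_1-\lambda_2)^2\ge0$ combined with $1-\lambda_1^2\lambda_2^2\ge0$. Once the pointwise comass is at most $1$ everywhere on $\Omega\times\mathbb{R}^2$, the form $\Theta(F)$ is a calibration. Together with the restriction property, this shows the graph is calibrated.
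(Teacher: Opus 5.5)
Your overall route is sound: reduce to the SVD frame, split off $\dd x_3\w\cdots\w\dd x_n$ (your Step 1 is correct), and take the normal form of a $2$-form on $\BR^4$. This is essentially the paper's route. But there is a genuine gap: you never carry out the one step where the hypothesis $\ld_1\ld_2\le1$ enters, namely determining $\eta$. Your working model of $\Ta(F)$ also points the wrong way. $\Ta(F)$ is a Gauss-map pullback, but its coefficients are not normalized minors of $\dd F$. That would be $\operatorname{vol}_\Sm$ extended in $y$, which you yourself note is not closed. In coordinates, $\Ta(F)$ has no $\dd y_1\w\dd y_2$ component at all.

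The missing computation is short. By Definition~\ref{defn:pre-calibration}, $\Ta(F)=\Psi^0(F)-\tfrac12\Psi^2(F)+\tfrac{2}{3!}\Psi^3(F)-\cdots$, and the $\ell=1$ term has coefficient $0$. In the frame \eqref{eqn:SVD-frame} one has $J(e_j)=\ld_je_{n+j}$. So when $\rank\dd F\le2$, $\Psi^2(F)=2\ld_1\ld_2\,\om^{n+1}\w\om^{n+2}\w\om^3\w\cdots\w\om^n$ and $\Psi^\ell(F)=0$ for $\ell\ge3$. This gives \eqref{form:Theta-rank-2}:
\[ \Ta(F)=\bigl(\om^1\w\om^2-\ld_1\ld_2\,\om^{n+1}\w\om^{n+2}\bigr)\w\om^3\w\cdots\w\om^n . \]
The coframe $\om^1,\om^2,\om^{n+1},\om^{n+2}$ is orthonormal; it is adapted to $T\Sm\op N\Sm$, not the coordinate coframe. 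Hence $\eta$ is already in your normal form with $\mu_1=1$ and $\mu_2=-\ld_1\ld_2$. The comass is therefore exactly $\max\{1,\ld_1\ld_2\}$, with no auxiliary sum-of-squares inequality, and the condition is sharp for this form.

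Your self-dual/anti-self-dual route also works once you have the correct coefficients. In the coframe $\dd x_1,\dd x_2,\dd y_1,\dd y_2$, let $P=(1+\ld_1^2)(1+\ld_2^2)$. The coefficients are $(1-\ld_1^2\ld_2^2)/\sqrt P$, $\ld_1\sqrt{(1+\ld_2^2)/(1+\ld_1^2)}$, $\ld_2\sqrt{(1+\ld_1^2)/(1+\ld_2^2)}$ and $0$. These give $|\eta^+|=|1-\ld_1\ld_2|/\sqrt2$ and $|\eta^-|=(1+\ld_1\ld_2)/\sqrt2$. The paper gets the same bound through Lemma~\ref{lem:comass-Stiefel}: $\det U-\ld_1\ld_2\det V\le\cos^2\ta+\ld_1\ld_2\sin^2\ta\le1$. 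This is Theorem~\ref{thm:2-dilation-crude} with $\rankdf=2$.

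A secondary point concerns closedness. It does not come from the construction preceding the definition; it is Theorem~\ref{thm:calibration}, and it uses minimality. Your sketch via divergence identities for minors, Pl\"ucker relations and the inner-variation equations is modeled on the wrong form. By \eqref{form:Theta-g}, $\Ta(F)$ is a function of $x$ times $\dd x_1\w\cdots\w\dd x_n$, plus
\[ \sum_\af \dd y_\af\w\Bigl(\bigl(\sum_{j,k}\sqrt g\,(g^{-1})^{jk}\pl_kf_\af\tfrac{\pl}{\pl x_j}\bigr)\mathbin\lrcorner(*1)\Bigr) . \]
So $\dd\Ta(F)=0$ is exactly the minimal graph system, and nothing else is needed.
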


In fact, let $x_1,\ldots, x_n$ be the coordinates on $\mathbb{R}^n$, and $y_1, y_2$ be the coordinates on $\mathbb{R}^2$.  Write $F = (f, g)$, where $f, g:\Om\to\BR$. Then, the condition \eqref{area-dec-2} in terms of $f, g$ is 
\begin{equation}|\nabla f|^2|\nabla g|^2-(\nabla f\cdot \nabla g)^2\leq 1\end{equation} and the graph of $F$ is calibrated by the $n$-form  
\begin{equation}\begin{split}
\displaystyle
\Theta(F)=\frac{1}{\sqrt{\Xi}}\Bigl(
&\left(1-|\nabla f|^2|\nabla g|^2+(\nabla f\cdot \nabla g)^2\right)\dd x_1\wedge\cdots \wedge\dd x_n\\
& + \dd y_1\wedge \left((1+|\nabla g|^2)*\dd f - (\nabla f\cdot \nabla g)*\dd g\right)\\
& + \dd y_2\wedge \left(-(\nabla f\cdot \nabla g)*\dd f + (1+|\nabla f|^2)*\dd g\right)
\Bigr).\end{split}
\end{equation}
where $\Xi=(1 + |\nabla f|^2)\,(1 + |\nabla g|^2) - (\nabla f\cdot\nabla g)^2$, and $*$ denotes the Hodge star on $\mathbb{R}^n$; thus $*\dd f$ and $*\dd g$ are $(n-1)$-forms on $\mathbb{R}^n$.

The connection between such an area-decreasing (or non-increasing) condition and minimal submanifold of higher codimension was made in \cite{Wang-03} and can be understood in terms of the Gauss map and Grassmannian geometry. In particular, it is known that such conditions imply stability and uniqueness of minimal graphs \cites{Lee-Tsui-14, Lee-Ooi-Tsui-19, Lee-Wang-03, Lee-Wang-08}.

Our theorem holds in any codimension under the following 2-dilation condition.

\begin{thm} (see Theorem~\ref{thm:2-dilation-crude})
Let $F:\Omega\subset \mathbb{R}^n\rightarrow \mathbb{R}^m$ be a smooth map whose
graph is a minimal submanifold of $\mathbb{R}^{n+m}$. Suppose that \( \rankdf = \sup_{\bx\in\Om}\bigl(\rank \dd F|_{\bx}\bigr) \geq 2 \). If the singular values $\{\lambda_i\}$ of \(\dd F\) satisfy
    \begin{align} \label{eqn:2-dilation}
        \ld_j\ld_k \leq \frac{1}{(\rankdf-1)^2}
    \end{align}
    for any \(1\leq j<k\leq \rankdf\) and at every point of \(\Om\), then the graph of $F$ is a calibrated submanifold of $\mathbb{R}^{n+m}$.
\end{thm}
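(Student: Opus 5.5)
We construct a closed $n$-form $\Theta(F)$ on a neighbourhood of $\Sigma$ (in fact on $\Omega\times\BR^m$, independent of $y$) with $\Theta(F)|_\Sigma=\operatorname{vol}_\Sigma$, and then bound its comass by one pointwise under \eqref{eqn:2-dilation}.

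\textbf{The form.} Let $e_1,\dots,e_n$ be the coordinate frame of $\BR^n$. Write $\bfI+\dd F^T\dd F=\matrixG$ for the induced metric, and $\Xi=\det\matrixG$. Generalizing the codimension-two formula, set
\[
\Theta(F)=\frac{1}{\sqrt{\Xi}}\,\bigl(\dd x_1-\textstyle\sum_\af \pl_1F^\af \dd y_\af\bigr)^{\sharp}\w\cdots
\]
Equivalently, $\Theta(F)$ is the pull-back under $(x,y)\mapsto x$ of the Gauss map of $\Sigma$, composed with the tautological $n$-form on $\RGr(n,n+m)$, $P\mapsto \operatorname{vol}_P$. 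Concretely, $\operatorname{vol}_\Sigma$ is the wedge of the $\matrixG^{-1}$-normalized tangent covectors $\dd x_i+\sum_\af\pl_iF^\af\dd y_\af$. Expanding this and keeping only the terms of degree $\le 1$ in $\dd y$ recovers the displayed formula for $m=2$. The higher-degree terms are included so that the form is the tautological one.

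\textbf{Restriction and closedness.} The identity $\Theta|_\Sigma=\operatorname{vol}_\Sigma$ holds by construction. For closedness, write $\Theta=\sum_I \Theta_I(x)\,\dd y_I\w \ast\dd x_I$-type terms. Then $\dd\Theta=0$ reduces to divergence identities of the form $\pl_i\bigl(\sqrt{\Xi}\,\matrixG^{ij}\,\cdot\bigr)=0$ for the minors. The minimal surface system gives $\pl_i(\sqrt\Xi \matrixG^{ij}\pl_jF^\af)=0$ and $\pl_i(\sqrt\Xi\matrixG^{ij})=0$. For higher minors, one uses the Piola-type identity that divergence-free rows of cofactors of a Jacobian remain divergence-free. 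This is a routine but lengthy computation.

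\textbf{Comass.} Fix a point. Choose orthonormal bases via the singular value decomposition $\dd F(e_i)=\ld_i\epsilon_i$ for $i\le\rankdf$, with $\ld_i=0$ beyond. In these bases,
\[
\Theta=\prod_i(1+\ld_i^2)^{-1/2}\sum_{J}(-1)^{|J|}\prod_{j\in J}\ld_j\;\dd y_J\w \dd x_{J^c},
\]
up to sign conventions. Only the degree-$\le 1$ part is kept in $\Theta(F)$, with the $\dd x$ coefficient $1-\sigma_2$-type. The comass is then bounded in two steps. First, apply a Wirtinger/Mackenzie-type inequality: the comass of $\sum_J c_J\,\dd y_J\w\dd x_{J^c}$ is at most $\sum_J|c_J|$ restricted appropriately. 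Second, use the Cauchy--Schwarz bound $\sup_{\text{unit }\xi}\langle\Theta,\xi\rangle\le$ the norm of the coefficient vector along the ``diagonal'' planes spanned by $\cos\ta_i e_i+\sin\ta_i\epsilon_i$. The goal is to show that the function
\[
h(\ta)=\frac{\prod_i(\cos\ta_i+\ld_i\sin\ta_i)\ \text{(truncated)}}{\prod_i\sqrt{1+\ld_i^2}}\le 1.
\]

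\textbf{Main obstacle.} The hardest step is reducing the comass over all unit simple $n$-vectors to these diagonal planes, and then controlling the cross terms. I would try to bound the contribution of each pair $j<k$ by $\ld_j\ld_k$. With $\rankdf-1$ competing indices per row, this is where the factor $(\rankdf-1)^{-2}$ enters, via a Gershgorin-type estimate on the quadratic form in the $\ld$'s. Showing that this quadratic form is dominated by $\prod(1+\ld_i^2)$ would then finish the proof.
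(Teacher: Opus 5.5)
The main gap is the form you propose. The Gauss-map pullback of the tautological form $P\mapsto\operatorname{vol}_P$, i.e.\ $g^{-1/2}\bigwedge_i\bigl(\dd x_i+\sum_\af\pl_i f_\af\,\dd y_\af\bigr)$ with $g=\det(g_{jk})=\Xi$, has comass identically one, so if it were closed no dilation hypothesis would be needed at all. But for $m\geq2$ it is not closed on minimal graphs. Its coefficients depend only on $x$, so $\dd$ preserves the $\dd y$-degree. Its $\dd y_\af\w\dd y_\bt$ component is $\pm g^{-1/2}\,{*}(\dd f_\af\w\dd f_\bt)$, and closedness would require $\dd^*\bigl(g^{-1/2}\,\dd f_\af\w\dd f_\bt\bigr)=0$. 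Minimality does not give this: what Piola-type identities encode is that $\dd f_\af\w\dd f_\bt$ is closed, not co-closed, and a non-constant weight would spoil that anyway. For the holomorphic, hence minimal, graph of $w=z^2$ in $\mathbb{C}^2$, that coefficient is $4|z|^2/(1+4|z|^2)$, which is not constant.

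Truncating to $\dd y$-degree at most one, as you do later, does not repair this, for three reasons.
\begin{enumerate}
\item It breaks $\Ta|_\Sm=\operatorname{vol}_\Sm$: in rank two the value on $T\Sm$ drops by $\ld_1^2\ld_2^2/\bigl((1+\ld_1^2)(1+\ld_2^2)\bigr)$.
\item It does not reproduce the $m=2$ formula, whose $\dd x_1\w\cdots\w\dd x_n$ coefficient is $(1-\ld_1^2\ld_2^2)/\sqrt{\Xi}$, not $1/\sqrt{\Xi}$.
\item It is still not closed. Its $\dd y_\af$-coefficient $g^{-1/2}{*}\dd f_\af$ is the flux of the minimal graph system only when $m=1$; for $w=z^2$ one gets $\sum_i\pl_i\bigl(\pl_i f_1/(1+4|z|^2)\bigr)\neq0$.
\end{enumerate}

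The missing idea is to make the form affine in $\dd y$, with $\dd y_\af$-coefficient exactly the flux
$\bigl(\sum_{j,k}\sqrt{g}\,(g^{-1})^{jk}\pl_k f_\af\,\tfrac{\pl}{\pl x_j}\bigr)\mathbin\lrcorner({*}1)=\sum_\bt\sqrt{h}\,(h^{-1})^{\af\bt}\,{*}\dd f_\bt$.
Then let $\Ta|_\Sm=\operatorname{vol}_\Sm$ determine the ${*}1$ coefficient, which comes out as $\sqrt{g}\bigl(\tr(g^{-1})-(n-1)\bigr)$. With this choice, $\dd\Ta(F)=0$ is literally the minimal graph system. This is the paper's $\Ta(F)=(1+\Psi)e^{-\Psi}(\operatorname{vol}_\Sm)$, which in the SVD coframe reads $\om^1\w\cdots\w\om^n-\sum_{\ell\geq2}(-1)^\ell(\ell-1)\sum_{|\CI|=\ell}\bigl(\prod_{j\in\CI}\ld_j\bigr)\Dt_\CI$. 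Its comass is no longer automatically one, and bounding it is the only place the dilation hypothesis enters.

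Your comass step is also not yet an argument. The reduction to ``diagonal'' planes is left open as the main obstacle. The first estimate you propose, comass at most $\sum_J|c_J|$, cannot give one as stated: for the correct rank-two form $(\om^1\w\om^2-\ld_1\ld_2\,\om^{n+1}\w\om^{n+2})\w\om^3\w\cdots\w\om^n$ it gives $1+\ld_1\ld_2$.

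The paper needs no special planes. It evaluates the form on an arbitrary unit $\rankdf$-plane in the relevant $2\rankdf$-dimensional factor, written as the column space of a $2\rankdf\times\rankdf$ matrix $A$ with blocks $U,V$ and $A^*A=\bfI_{\rankdf}$. Set $(\cos\ta)^2=\frac{1}{\rankdf}\tr(U^*U)$.
\begin{itemize}
\item Hadamard and AM--GM give $\det U\le(\cos\ta)^{\rankdf}$.
\item Cauchy--Schwarz and Maclaurin give $\sum_{|\CI|=\ell}|\det A_\CI|\le\binom{\rankdf}{\ell}(\cos\ta)^{\rankdf-\ell}(\sin\ta)^\ell$.
\item The pairwise hypothesis gives $\prod_{j\in\CI}\ld_j\le(\rankdf-1)^{-|\CI|}$ for $|\CI|\geq2$.
\end{itemize}
Since $(\cos\ta)^{\rankdf}\le(\cos\ta)^2$ and $(\cos\ta)^{\rankdf-\ell}(\sin\ta)^\ell\le(\sin\ta)^2$ for $\ell\geq2$, the value is at most $(\cos\ta)^2+(\sin\ta)^2\sum_{\ell=2}^{\rankdf}(\ell-1)\binom{\rankdf}{\ell}(\rankdf-1)^{-\ell}$. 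This sum equals one, because $\sum_{\ell=0}^{\rankdf}(\ell-1)\binom{\rankdf}{\ell}x^\ell=\bigl((\rankdf-1)x-1\bigr)(1+x)^{\rankdf-1}$ vanishes at $x=\frac{1}{\rankdf-1}$ while its $\ell=0,1$ terms are $-1$ and $0$. That exact identity, not a Gershgorin-type bound, is where the constant $(\rankdf-1)^{-2}$ comes from.
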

Again, $F$ is calibrated by $\Theta(F)$ defined in Definition~\ref{defn:pre-calibration}. We remark that when $\rankdf\leq 1$, our theorem recovers the codimension-one case.

Since any smooth minimal submanifold of Euclidean space is locally graphical, and condition \eqref{eqn:2-dilation} is automatically satisfied in a neighborhood of each point, our result implies that every smooth minimal submanifold in Euclidean space is locally area-minimizing, see Bryant \cite{Bryant}\footnote{Our construction is nevertheless fundamentally different from that proposed in \cites{Bryant, Lawlor-89, Lawlor-91}, where the approach relies on a volume-decreasing map onto the submanifold $\Sigma$, followed by pulling back the volume form of $\Sigma$ via this map. In contrast, our construction begins with a tautological form on $\Sigma$ arising from the graphical assumption, and then extends this form by parallel transport along the $\mathbb{R}^m$ direction.}. In particular, this yields an alternative proof of the theorem of Lawlor and Morgan \cite{Lawlor-Morgan-96}, who obtained the same result using curvy slicing. Moreover, it provides an effective criterion for determining exactly where a given minimal graph is area-minimizing.

The upper bound on the right-hand side of \eqref{eqn:2-dilation} can be replaced by a bound of the order $\frac{1}{\rankdf-1}$, which is sharper when $\rankdf$ is larger, see Theorem~\ref{thm:2-dilation-refined}.

As an application of this construction, we confirm a conjecture \cite{Lawson-Osserman-77}*{Conjecture 2.1} of Lawson and Osserman concerning Lipschitz solutions of the minimal graph system under 2-dilation assumptions\footnote{Lawson and Osserman referred to it as the  ``minimal surface system", we use ``minimal graph system" to emphasize that it applies in arbitrary dimension and codimension.}.

\begin{defn}
    A smooth map $(f_1,\cdots,f_m):\Omega\subset \mathbb{R}^n\rightarrow \mathbb{R}^m$ is said to satisfy the \emph{minimal graph system} if 
    \begin{align} \label{eqn:minimal-graph}
    \sum_{j,k=1}^n \pl_j\bigl( \sqrt{g}(g^{-1})^{jk} (\pl_k f_\af) \bigr) &= 0
    \quad\text{for }\af = 1,\ldots,m ~,\end{align} where \begin{align}
    g_{jk} = \dt_{jk} + \sum_{\af=1}^m(\pl_jf_\af)(\pl_kf_\af) \quad, \quad g = \det(g_{jk}),
\end{align} and $(g^{-1})^{jk}$ is the inverse of $g_{jk}$.
    
\end{defn}

 For a smooth map, the minimal graph system is the Euler--Lagrange equation of the volume functional, and is equivalent to the stationarity of the graph under \emph{ambient} variations.  Their conjecture asserts that this equivalence should remain valid for Lipschitz graphs: that is, every Lipschitz weak solution of the minimal graph system should already satisfy the full geometric Euler–Lagrange condition of graph stationarity.  If this conjecture holds, one can connect the minimal graph system to the machinery of geometric measure theory, including monotonicity, blow-up analysis, etc.  Recently, Dimler \cite{Dimler-23}*{Theorem 5.4} proved a partial form of the conjecture under an additional invariance assumption; Hirsch, Mooney, and Tione \cite{Hirsch-Mooney-Tione-23} proved the conjecture when the domain is $2$-dimensional. We confirm the conjecture under \(2\)-dilation conditions for any dimension $n$ and codimension $m$.

\begin{thm} (see Corollary~\ref{LO}) The Lawson--Osserman conjecture holds for any Lipschitz weak solution $F:\Omega\subset \mathbb{R}^n \rightarrow \mathbb{R}^m$ of the minimal graph system that satisfies the condition \eqref{eqn:2-dilation} almost everywhere. Moreover, such an \(F\) is smooth.
\end{thm}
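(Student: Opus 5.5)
The plan is to show that a Lipschitz weak solution satisfying \eqref{eqn:2-dilation} a.e.\ is \emph{calibrated} in a weak sense. Calibration then yields area-minimization among competitors with the same boundary, hence stationarity under ambient variations, which is exactly the Lawson--Osserman assertion. Smoothness follows by applying Allard's regularity theorem to the resulting stationary integral varifold.

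\textbf{Step 1: closedness of the form.} Given $F\in W^{1,\infty}(\Om;\BR^m)$, I will define $\Theta(F)$ by the formula of Definition~\ref{defn:pre-calibration}. This is an $n$-form on $\Om\times\BR^m$ whose coefficients depend only on $x$, through $\dd F(x)$, and which is extended trivially in the $y$ directions. Its coefficients are therefore bounded and measurable. Closedness of $\Theta(F)$ for smooth $F$ should be equivalent to the minimal graph system \eqref{eqn:minimal-graph}, written in divergence form. The reason is that, since the coefficients are independent of $y$, $\dd\Theta(F)=0$ reduces to $\dd_x$ of the $\dd y$-coefficients. Each of these is $\ast$ of a vector field of the form $\sqrt g\, g^{-1}\nabla f_\af$ plus combinations of such terms. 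I will check that closedness is exactly the divergence-free condition \eqref{eqn:minimal-graph}, plus possibly identities that hold for all maps, such as null-Lagrangian, determinant-type identities, which persist for Lipschitz maps. This gives $\dd\Theta(F)=0$ in the sense of distributions (currents) on $\Om\times\BR^m$ for a weak solution.

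\textbf{Step 2: comass bound and the weak calibration argument.} The comass bound $\le 1$ is pointwise and algebraic in $\dd F(x)$. It therefore holds a.e.\ by Theorem~\ref{thm:2-dilation-crude}, and the same holds for the identity $\Theta(F)|_{T\Sigma}=\mathrm{vol}_\Sigma$ on the approximate tangent planes of the Lipschitz graph current $G_F$. Let $T$ be any integral current with $\pl T=\pl (G_F\llcorner U)$, and suppose $T$ is supported in a region where $\Theta(F)$ is defined. I want
\[\mathbf M(G_F\llcorner U)=G_F(\Theta)=T(\Theta)\le \mathbf M(T).\]
The middle equality needs the closed-form pairing $ (G_F-T)(\Theta)=0$ for an $L^\infty$ closed form. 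I would obtain it by mollifying $\Theta(F)$ in $x$ only. Mollification commutes with $\dd$, so the mollified form stays closed, and since $G_F-T=\pl S$ for some $S$, its pairing with $G_F-T$ vanishes. The comass bound is preserved under mollification because comass is a norm and thus convex. On the graph side, I will pass to the limit using the fact that the mollified forms converge to $\Theta(F)$ a.e.\ in $x$ and the graph is parametrized over $x$, so dominated convergence applies. On the $T$ side, I only need the bound $\le\mathbf M(T)$, which holds uniformly.

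\textbf{Step 3: stationarity and smoothness.} Area-minimization among ambient deformations $\phi_t(G_F)$ with compact support gives first variation zero, i.e.\ stationarity. The mass-minimizing current $G_F$ has density $1$ a.e. Being Lipschitz, it also has small excess at small scales near Lebesgue points of $\dd F$. Allard's theorem then gives $C^{1,\af}$ regularity near such points. At every point, I intend to invoke the fact that the tangent cones are minimizing cones which are Lipschitz graphs over $\BR^n$, and hence have a single sheet. Combined with Allard or De Giorgi-type arguments, this should give $C^{1,\af}$ everywhere, and then Schauder bootstrapping yields smoothness. The main obstacle is Step 1: verifying that weak closedness follows from the weak minimal graph system alone, i.e.\ that every term in $\dd\Theta(F)$ is either the distributional equation \eqref{eqn:minimal-graph} or a null-Lagrangian identity valid for $W^{1,\infty}$ maps. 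A secondary obstacle is everywhere-regularity in Step 3. If the tangent-cone argument fails, I would instead use Lipschitz graphicality to bound the excess directly.
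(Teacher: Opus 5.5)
Your Steps 1--2 and the stationarity conclusion are correct and follow the paper's route. Step 1 is not actually an obstacle. By \eqref{form:Theta-g}, the $(*1)$-coefficient of $\Ta(F)$ is independent of $y$, and the $\dd y_\af$-coefficient is exactly $\bigl(\sum_{j,k}\sqrt{g}\,(g^{-1})^{jk}(\pl_k f_\af)\tfrac{\pl}{\pl x_j}\bigr)\mathbin\lrcorner(*1)$. So distributional closedness is literally \eqref{eqn:weakly-minimal}, with no null-Lagrangian terms. The paper treats $\Ta(F)$ as a locally real flat cochain and quotes Federer. Your mollification in $x$ is a valid and more elementary substitute: closedness and comass $\le 1$ survive convolution, and dominated convergence works on the graph side because $\Ta(F)$ depends only on $x$. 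Just compare $T_F\mathbin{\llcorner}W$ with $T_F\mathbin{\llcorner}W+\pl S$, $\spt S\subset W$, as the paper does. Then you never have to produce a filling $S$ inside the region where the mollified form is defined.

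The genuine gap is the everywhere-regularity in Step 3. A mass-minimizing tangent cone that is the graph of a Lipschitz map over $\BR^n$ need not be a plane when $m\geq 2$, even though it is single-sheeted with multiplicity one. De Giorgi-type arguments do not transfer to systems. The Lawson--Osserman cone $F(x)=\tfrac{\sqrt5}{2}|x|\,\eta(x/|x|)$, with $\eta:S^3\to S^2$ the Hopf map, is a Lipschitz, non-$C^1$ solution of the minimal graph system on $\BR^4$. It is even area-minimizing, since Harvey--Lawson showed it is coassociative. Its density at the vertex is strictly larger than $1$, so Allard's theorem never applies there. Your fallback of bounding the excess from Lipschitz graphicality fails for the same reason.

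What is missing is a second use of the $2$-dilation hypothesis. The tangent cone at a putative singular point is the graph of an entire Lipschitz map that still satisfies \eqref{eqn:2-dilation}. One way to see this: the $2\times 2$ minors of the blow-ups converge weakly-$*$, and $\ld_1\ld_2=\|\Ld^2\dd F\|$ is a convex function of $\Ld^2\dd F$. The paper then applies the Bernstein theorem under $2$-dilation bounds (Wang 2003, Theorem A; Jing--Yang 2021, Theorem 1.3), whose proofs apply to entire Lipschitz minimal graphs. So the tangent cone is a plane, and only then does Allard give regularity at that point. Note that the Lawson--Osserman cone has $\ld_1\ld_2=5$, which is precisely what this hypothesis rules out.
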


The paper is organized as follows. In Section~\ref{sec:form}, we define the form $\Theta(F)$ for any smooth map $F:\Omega \subset \mathbb{R}^n\rightarrow \mathbb{R}^m$. In Section~\ref{sec:closed} we present a coordinate expression for $\Theta(F)$ and show that it is closed if and only if $F$ is a minimal map.   Section~\ref{sec:comass} is devoted to the comass estimates and the geometric conditions that ensure calibration. Finally, in Section~\ref{sec:Lipschitz} we discuss Lipschitz solutions of the minimal graph system and confirm the Lawson--Osserman conjecture under 2-dilation conditions.

\begin{ack*}
    The first-named author thanks Ulrich Menne and Chen-Kuan Lee for helpful discussions on geometric measure theory.
\end{ack*}

\section{Differential Forms Associated with a Graph} \label{sec:form}

Let \(\Om\subset\BR^n\) be a domain, and \(F:\Om\to\BR^m\) be a smooth map.  In this section, we construct an \(n\)-form $\Theta(F)$ on \(\Om\times\BR^m\) associated with the map \(F\).

We present three different, yet equivalent, definitions of $\Theta(F)$. The first (Definition \ref{defn:pre-calibration}) is coordinate- and frame-independent\footnote{The definition depends on the choice of the base $\mathbb{R}^n$, however.} and can be interpreted as the pullback, via the Gauss map, of tautological forms on the Grassmannian. The second \eqref{form:Theta-tangent-normal} relies on the singular value decomposition of the differential of the defining map $F$ and is well suited for estimating the comass. The third (\eqref{form:Theta-g} and \eqref{form:Theta-h-0}) is expressed in terms of the components of the defining map $F$, allowing us to connect with the minimal surface system and to prove the closedness of $\Theta(F)$.

The graph of \(F\), \(\Sm = \{(x,F(x)):x\in\Om\}\), is a submanifold in \(\Om\times\BR^m\).  The key step is to construct an \(n\)-form on \(\Sm\), and then extend it by parallel transport along the \(\BR^m\)-factor. 

\begin{defn} \label{defn:tangent-to-normal-map}
We first introduce a map \(J:T\Sm\to N\Sm\) from the tangent bundle of $\Sigma$ to the normal bundle of $\Sigma$.
Fix a point \(p\in\Sm\) and write $\pi_1(p)\in \Omega$ for its projection onto $\mathbb{R}^n$. Define matrices 
    \begin{align*}
        \matrixG_p = \bfI_n + (\dd F|_{\pi_1(p)})^*(\dd F|_{\pi_1(p)})
        \qquad \text{and} \qquad
        \matrixH_p = \bfI_m + (\dd F|_{\pi_1(p)})(\dd F|_{\pi_1(p)})^* ~,
    \end{align*}
    where \(\dd F|_{\pi_1(p)}: T_{\pi_1(p)}\Om\cong\BR^n \to T_{F(\pi_1(p))}\BR^m\cong\BR^m\), and \(*\) denotes the adjoint matrix.
    Define an isometric isomorphism $L_p: \BR^n \rightarrow T_p\Sm$ by
    \begin{align*}
        L_p(v) = \bigl( \matrixG_p^{-\frac{1}{2}}(v), (\dd F|_{\pi_1(p)})\circ\matrixG_p^{-\frac{1}{2}}(v)\bigr) ~,
    \end{align*}
    and an isometric isomorphism  $L^\perp_p:\BR^m\rightarrow N_p\Sm$ by
    \begin{align*}
        L^\perp_p(w) = \bigl( -(\dd F|_{\pi_1(p)})\circ\matrixH_p^{-\frac{1}{2}}(w), \matrixH_p^{-\frac{1}{2}}(w)\bigr) ~.
    \end{align*}
    Finally, \(J_p:T_p\Sm\to N_p\Sm\) is defined to be
    \begin{align} \label{map:tangent-to-normal}
        J_p &= L_p^\perp\circ(\dd F|_{\pi_1(p)})\circ(L_p)^{-1}
    \end{align}

    With the help of Lemma~\ref{lem:linear-alg} below, one can verify that \(J_p\) can also be expressed as:
    \begin{align} \label{map:tangent-to-normal-nonsym} \begin{split}
        J_p &= \pi^{N_p\Sm} \circ \iota_2 \circ\,(\dd F|_{\pi_1(p)}) \circ \matrixG_p \circ (\pi_1|_{T_p\Sm}) \\
        &= \pi^{N_p\Sm} \circ \iota_2 \circ \matrixH_p \circ\,(\dd F|_{\pi_1(p)}) \circ (\pi_1|_{T_p\Sm})
    \end{split} \end{align}
    where \(\pi_1\) is the projection from \(\BR^n\times\BR^m\) onto \(\BR^n\), \(\iota_2\) the inclusion from \(\BR^m\) into \(\{0\}\times\BR^m\subset\BR^n\times\BR^m\), and \(\pi^{N_p\Sm}\) is the orthogonal projection from \(\BR^n\op\BR^m\cong T_p\Sm\op N_p\Sm\) onto \(N_p\Sm\).
\end{defn}

By contracting with \(J\) and using the metric dual, we construct an operator on differential forms and a family of differential forms $\Psi^\ell(F), \ell=0, 1, 2, 3\ldots$ associated to any smooth map $F:\Omega\subset \mathbb{R}^n\rightarrow \mathbb{R}^m$.
\begin{defn} \label{defn:component-pre-calibration}
    Let \(\Psi\) be the endomorphism of \(C^\infty(\Sm,\Ld^k(\BR^n\times\BR^m))\) defined by
    \begin{align*}
        \Psi(\xi) &= \sum_{i=1}^n (J(e_i))^\flat \w \iota_{e_i}\xi ~,
    \end{align*}
    where \(e_1,\ldots,e_n\) is any orthonormal basis of \(T\Sm\), $\iota_{e_i}$ denotes interior multiplication, and \(\flat:N\Sm \to (N\Sm)^*\) is the musical isomorphism of \(N\Sm\).  It is straightforward to verify that \(\Psi\) does not depend on the choice of basis.

    Fix an orientation of \(\BR^n\).  Since \(\Sm\) is graphical over \(\BR^n\), this fixes a volume form \(\operatorname{vol}_{\Sigma}\) on \(\Sm\).  For any non-negative integer \(\ell\), define the iteration:
    \begin{align}\label{eq:Psi}
        \Psi^\ell(F) &= (\Psi)^\ell(\operatorname{vol}_{\Sigma}) ~.
    \end{align}
    Note that \(\Psi^0(F) = \operatorname{vol}_{\Sigma}\).  At every \(p\in\Sm\), \(\Psi^\ell(F)\) lies in the canonical summand \( \Ld^{\ell}(N_p\Sm)^*\ot\Ld^{n-\ell}(T_p^*\Sm) \) of \(\Ld^n(\BR^n\times\BR^m)\). Parallel transporting along the \(\BR^m\) summand defines an \(n\)-form on \(\Om\times\BR^m\), which will still be denoted by \(\Psi^\ell(F)\).
\end{defn}

\begin{rmk}
    Since \(J\) depends only on the tangent plane, one can define \(J\) as a bundle homomorphism from the tautological bundle over the graph chart of \(\RGr(n,n+m)\) to the orthogonal tautological bundle over \(\RGr(n,n+m)\).  For a graphical submanifold, pulling it back via the Gauss map recovers the \(J\)-map defined in Definition~\ref{defn:tangent-to-normal-map}.  By using \(\RGr(n,n+m) = \RO(n+m)/(\RO(n)\times\RO(m))\), it is equivalent to define an \(\RO(n)\times\RO(m)\)-equivariant \(J\) on the graph chart of \(\RO(n+m)\).  Write \(W\in\RO(n+m)\) as
    \begin{align*}
        W &= \begin{bmatrix}
        W_1 & W_2 \\ W_3 & W_4
        \end{bmatrix}
        = \begin{bmatrix}
            \vec{w}_1 &\cdots &\vec{w}_n &\vec{w}_{n+1} &\cdots &\vec{w}_{n+m}
        \end{bmatrix} ~,
    \end{align*}
    where the first expression uses the \((n+m)\times(n+m)\) block matrices, and the second expression consists of column vectors.  The map \(\RO(n+m)\to\RGr(n,n+m)\) sends \(W\) to \(\spn\{\vec{w}_1,\cdots,\vec{w}_n\}\).  The graph chart is where \(W_1\) is invertible.  The map \(J\) at \(W\) is a homomorphism from \(\spn\{\vec{w}_1,\cdots,\vec{w}_n\}\) to \(\spn\{\vec{w}_{n+1},\cdots,\vec{w}_{n+m}\}\) defined by
    \begin{align*}
        J(\vec{w}_i) &= \sum_{\af=1}^m \bigl( W_4^*\,W_3\,(W_1)^{-1}\,(W_1^*)^{-1} \bigr)_{\af i} \, \vec{w}_{n+\af} ~.
    \end{align*}
    It is not hard to verify that \(J\) is \(\RO(n)\times\RO(m)\)-equivariant, and hence descends to \(\RGr(n,n+m)\).

    Similarly, \(\operatorname{vol}_{\Sigma}\) comes from the tautological volume form with \(\sgn(\det(W_1))\) on the graph chart of the Grassmannian, and all the \(\Psi^\ell(F)\) can be defined on the Grassmannian.
\end{rmk}

By using the singular value decomposition (SVD) of \(\dd F|_{\pi_1(p)}\), one obtains a concrete expression for \(\Psi^\ell(F)\).  There exists an \(\RO(n)\times\RSO(m)\) change of coordinates on \(\BR^n\times\BR^m\) so that
\begin{align} \label{eqn:SVD}
    \dd F|_{\pi_1(p)}\bigl(\frac{\pl}{\pl x_j}\bigr) &= \ld_j\frac{\pl}{\pl y_j} ~,
\end{align}
for all \(j\), where \(\ld_1\geq\cdots\geq\ld_n\geq0\) are the eigenvalues of \(\sqrt{(\dd F|_{\pi_1(p)})^*(\dd F|_{\pi_1(p)})}\).  When \(j > \rank(\dd F|_{\pi_1(p)})\), \(\ld_j = 0\).  If \(n > m\), introduce dummy variables \(y_j\) for \(j > m\).  With \eqref{eqn:SVD}, introduce the frame:
\begin{align} \label{eqn:SVD-frame}
    e_i = \frac{1}{\sqrt{1+\ld_i^2}}\bigl(\frac{\pl}{\pl x_i} + \ld_i\frac{\pl}{\pl y_i}\bigr)
    \quad\text{and}\quad
    e_{n+i} = \frac{1}{\sqrt{1+\ld_i^2}}\bigl(-\ld_i\frac{\pl}{\pl x_i} + \frac{\pl}{\pl y_i}\bigr)
\end{align}
for \(i=1,\ldots,n\).  Denote by \(\{\om^j\}_{1\leq j\leq 2n}\) the dual coframe.

It follows that \(\operatorname{vol}_{\Sigma}|_p = \om^1\w\cdots\w\om^n\), \(\{e_j\}_{j=1}^n\) is an oriented, orthonormal basis for \(T_p\Sm\), and \(\{e_{n+j}\}_{j=1}^m\) is an orthonormal basis for \(N_p\Sm\).  Moreover, for \(j=1,\ldots, n\),
\begin{align*}
    J_p(e_j) &= \ld_j e_{n+j} ~.
\end{align*}
We are now ready to describe \(\Psi^\ell(F)\) by using this frame.
Fix \(\ell\in\{1,\ldots,n\}\). Let $\CI \subset \{1, \cdots, n\}$ be a subset with \(|\CI| = \ell\).  Write \(\CI = \{i_1,\ldots,i_\ell\}\) where  \(1\leq i_1< \cdots< i_\ell\leq n\), and define a function $\chi_\CI:\{1,\cdots,n\} \rightarrow \{1, \cdots, 2n\}$ by
\begin{align} \label{fct:index}
    \chi_\CI(j) = \begin{cases}
        j &\text{if }j\notin\CI \\
        n+j &\text{if }j\in\CI
    \end{cases} ~,
\end{align}
Using this, define the $n$-form 
\begin{align} \label{form:special-basis}
    \Dt_\CI &= \om^{\chi_\CI(1)}\w\om^{\chi_\CI(2)}\w\cdots\w\om^{\chi_\CI(n)} ~.
\end{align}
In other words, in the wedge product defining $\Dt_\CI$, the \(j^{\text{th}}\) factor of \(\Dt_\CI\) is \(\om^j\) if \(j\notin\CI\), and is \(\om^{n+j}\) if \(j\in\CI\).  It is not hard to see that
\begin{align} \label{eqn:component-pre-calibration-SVD}
    \Psi^\ell(F) &= (\ell!)\, \sum_{\CI:|\CI|=\ell} (\prod_{j\in\CI}\ld_j)\Dt_\CI ~.
\end{align}

\subsection{The definition of $\Theta(F)$}

The $n$-form $\Theta(F)$ is a particular linear combination of \(\Psi^\ell(F)\):
\begin{defn} \label{defn:pre-calibration}
    For a domain \(\Om\subset\BR^n\) and a smooth map \(F:\Om\to\BR^m\), define an \(n\)-form on \(\Om\times\BR^m\) by
    \begin{align}
        \Ta(F) &= \Psi^0(F) - \sum_{\ell\geq 1}(-1)^\ell\frac{\ell - 1}{\ell!}\Psi^\ell(F) \\
        &= \Psi^0(F) - \frac{1}{2}\Psi^2(F) + \frac{2}{3!}\Psi^3(F) - \frac{3}{4!}\Psi^4(F) \pm \cdots ~. \notag
    \end{align}
    Since \(1 - \frac{1}{2}x^2 + \frac{2}{3!}x^3 - \frac{3}{4!}x^4 \pm \cdots = (1+x)\exp(-x)\), \(\Ta(F)\) may be formally written as
   \begin{align*}
        \Ta(F) &= \bigl((1+\Psi)\exp(-\Psi)\bigr)(\operatorname{vol}_{\Sigma}) = (\Psi^0(F)+\Psi^1(F))\exp(-\Psi^1(F)) ~.
    \end{align*}
\end{defn}

Clearly \(\Ta(F)|_\Sm = \operatorname{vol}_{\Sigma}\), so \(\Ta(F)\) is a natural candidate for a calibration of \(\Sm\).

In terms of the SVD frame \eqref{eqn:SVD-frame},
\begin{align} \label{form:Theta-tangent-normal}
    \Ta(F) &= \om^1\w\cdots\w\om^n - \sum_{\ell\geq2} (-1)^\ell(\ell-1) \bigl[ \sum_{\CI:|\CI| = \ell}(\prod_{j\in \CI}\ld_{j})\,\Dt_{\CI} \bigr] ~.
\end{align}
For example, when \(\rank(\dd F|_{\pi_1(p)})\leq 2\),
\begin{align} \label{form:Theta-rank-2}
    \Ta(F) &= (\om^1\w\om^2 - \ld_1\ld_2\,\om^{n+1}\w\om^{n+2})\w\om^3\w\cdots\w\om^n ~;
\end{align}
when \(\rank(\dd F|_{\pi_1(p)})= 3\),
\begin{align} \label{form:Theta-rank-3} \begin{split}
    \Ta(F) &= \Bigl( \om^1\w\om^2\w\om^3 - \ld_1\ld_2\,\om^{n+1}\w\om^{n+2}\w\om^3 - \ld_2\ld_3\,\om^1\w\om^{n+2}\w\om^{n+3} \\
    &\qquad - \ld_1\ld_3\,\om^{n+1}\w\om^2\w\om^{n+3} + 2\ld_1\ld_2\ld_3\,\om^{n+1}\w\om^{n+2}\w\om^{n+3} \Bigr)\w\om^4\w\cdots\w\om^n ~.
\end{split} \end{align}

\section{The Closedness Condition} \label{sec:closed}

The main purpose of this section is to investigate when \(\dd\Ta(F) = 0\).  

\subsection{A Coordinate Expression of $\Theta(F)$}

We first write \(\Ta(F)\) using the components of \(F\).

\begin{prop} \label{prop:calibration}
    For \(F = (f_1,\cdots,f_m):\Om\subset\BR^n\to\BR^m\), the \(n\)-form \(\Ta(F)\) on \(\Om\times\BR^m\) defined in Definition~\ref{defn:pre-calibration} is equal to
    \begin{align} \label{form:Theta-h-0}
        \sqrt{h}\bigl(\tr[(h^{-1})^{\af\bt}] - (m-1)\bigr)(*1) + \sum_{\af,\bt=1}^m \dd y_\af\w\sqrt{h}(h^{-1})^{\af\bt} (*\dd f_\bt)
    \end{align} 
    where \(*\) is the Hodge star on \(\BR^n\), 
    \[h_{\af\bt} =\dt_{\af\bt} + \sum_{j=1}^n(\pl_jf_\af)(\pl_jf_\bt),\quad h = \det(h_{\alpha\beta}),\] and $(h^{-1})^{\alpha\beta}$ is the inverse of $h_{\alpha\beta}$.
\end{prop}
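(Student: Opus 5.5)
The plan is to check the identity pointwise. Both sides are $n$-forms on $\Om\times\BR^m$ that are invariant under translation in the $\BR^m$ direction: the left side by construction (parallel transport), the right side because its coefficients depend only on $x$. Both sides are also equivariant under $\RO(n)\times\RSO(m)$ changes of coordinates. On the right this holds because $h$, $\tr h^{-1}$ and $\sum_{\af,\bt}\dd y_\af\w (h^{-1})^{\af\bt}(*\dd f_\bt)$ are invariantly defined. Indeed, the last is the contraction of $h^{-1}$ against the $\BR^m$-valued forms $\dd y$ and $*\dd F$, and it transforms correctly under $\RO(n)$ provided we track the sign of the Hodge star; the paper restricts to orientation-compatible changes, so this is fine.

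It therefore suffices to fix a point and work in the SVD coordinates \eqref{eqn:SVD}. There $\pl_j f_\af=\ld_\af\dt_{j\af}$, so $h=\mathrm{diag}(1+\ld_\af^2)$ and $\sqrt h=\prod_j\sqrt{1+\ld_j^2}$. We then compare with \eqref{form:Theta-tangent-normal}.

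In these coordinates the right side becomes
\[
\sqrt h\Bigl(\sum_\af \tfrac{1}{1+\ld_\af^2}-(m-1)\Bigr)\,\dd x_1\w\cdots\w\dd x_n+\sum_\af \frac{\sqrt h\,\ld_\af}{1+\ld_\af^2}\,\dd y_\af\w *\dd x_\af .
\]
Here $\sum_\af\frac{1}{1+\ld_\af^2}-(m-1)=1-\sum_\af\frac{\ld_\af^2}{1+\ld_\af^2}$. Padding with dummy indices with $\ld=0$ does not change this quantity, so we may take $m=n$.

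On the left side we substitute the coframe. Writing $c_j=(1+\ld_j^2)^{-1/2}$, we have $\om^j=c_j(\dd x_j+\ld_j\dd y_j)$ and $\om^{n+j}=c_j(-\ld_j\dd x_j+\dd y_j)$. Then
\[
\Dt_\CI=\prod_j c_j\;\bigwedge_j \phi_j,\qquad \phi_j=\begin{cases}\dd x_j+\ld_j\dd y_j & j\notin\CI,\\ -\ld_j\dd x_j+\dd y_j & j\in\CI,\end{cases}
\]
and $\prod_j c_j=1/\sqrt h$.

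The core of the proof is a generating-function computation. Introduce formal parameters: the form in \eqref{form:Theta-tangent-normal} equals $\sum_\CI a_{|\CI|}\prod_{j\in\CI}\ld_j\,\Dt_\CI$, where $a_0=1$, $a_1=0$ and $a_\ell=-(-1)^\ell(\ell-1)$. This matches $(1+t)e^{-t}$ after factoring the $\ell!$. Expanding $\bigwedge_j\phi_j$ by choosing, for each $j$, either the $\dd x_j$ or the $\dd y_j$ term, the coefficient of a monomial with $\dd y$ in the positions $S$ is
\[
\sum_\CI a_{|\CI|}\prod_{j\in\CI}\ld_j\prod_{j\in S\setminus\CI}\ld_j\prod_{j\in\CI\setminus S}(-\ld_j).
\]
Grouping by $A=\CI\cap S$ and $B=\CI\setminus S$, this is $\prod_{S}\ld_j\cdot\sum_{A\subset S,\,B\subset S^c}a_{|A|+|B|}(-1)^{|B|}\prod_{B}\ld_j^2$.

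The key step is showing this vanishes whenever $|S|\ge2$, and computing it for $|S|=0,1$. I will use the representation $a_k=\frac{d^k}{dt^k}\big[(1+t)e^{-t}\big]_{t=0}$ combined with the fact that the $A$-sum is a finite difference. More concretely, $\sum_{A\subset S}a_{|A|+b}$ is the $|S|$-fold application of $(1+E)$, where $E$ is the shift, to the sequence $a$. Since $a_k$ is the $k$-th derivative at $0$ of $(1+t)e^{-t}$, shifting corresponds to differentiation, so $1+E$ corresponds to $1+\frac{d}{dt}$. Then
\[
\Bigl(1+\tfrac{d}{dt}\Bigr)\big[(1+t)e^{-t}\big]=e^{-t},\qquad \Bigl(1+\tfrac{d}{dt}\Bigr)^2\big[(1+t)e^{-t}\big]=0,
\]
which kills every $|S|\ge2$. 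The remaining cases are:
\begin{itemize}
\item For $|S|=1$ we get $\sum_{B}(-1)^{|B|}\prod_B\ld_j^2\,\partial^{|B|}e^{-t}|_0=\prod_{j\notin S}(1+\ld_j^2)$.
\item For $|S|=0$ we get $\sum_B(-1)^{|B|}\prod_B\ld_j^2\,a_{|B|}$. Using $a_k=(-1)^k(1-k)$, this equals $\prod_j(1+\ld_j^2)-\sum_k\ld_k^2\prod_{j\neq k}(1+\ld_j^2)$.
\end{itemize}
Dividing by $\sqrt h$ then matches both coefficients above, up to the sign bookkeeping of moving $\dd y_\af$ to the front, where $\dd x_1\w\cdots\w\dd y_\af\w\cdots=\dd y_\af\w *\dd x_\af$.

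I expect the main obstacle to be organizing this combinatorial cancellation cleanly, together with making the equivariance reduction rigorous: the Hodge-star term must transform correctly, and the dummy variables for $n>m$ must be handled.
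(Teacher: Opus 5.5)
Your proposal is correct. The outer structure matches the paper's: translation invariance along $\BR^m$, equivariance under product orthogonal changes of coordinates, then a pointwise check in SVD coordinates, padded with zero singular values.

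The difference is in the core computation, where the two arguments change basis in opposite directions.
\begin{itemize}
\item The paper writes down the candidate \eqref{form:Theta-SVD} and verifies it. It evaluates the candidate on the frame tuples $(e_{n+1},\ldots,e_{n+\ell},e_{\ell+1},\ldots,e_n)$, i.e.\ it computes the $\Dt_\CI$-components and compares them with \eqref{form:Theta-tangent-normal}. Vanishing on ``mixed'' tuples, those containing both $e_j$ and $e_{n+j}$, is automatic, because each monomial of the candidate uses exactly one of $\dd x_j,\dd y_j$ for every $j$.
\item You instead expand each $\Dt_\CI$ in the $\dd x/\dd y$ monomials and compute the coefficients directly. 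The identity $(1+\frac{d}{dt})^2\bigl[(1+t)e^{-t}\bigr]=0$ then kills every monomial with two or more $\dd y$ factors.
\end{itemize}
I checked your coefficient bookkeeping, the finite-difference step, and the $|S|=0,1$ evaluations; all are right. In the $|S|=1$ case the prefactor $\prod_S\ld_j=\ld_s$ is left implicit in your bullet but is clearly intended.

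Each route has its advantage. The paper's is shorter once the target formula is known. Yours derives the formula rather than guessing it. It also shows why the weights $(\ell-1)$, i.e.\ the generating function $(1+t)e^{-t}$, are the right ones: annihilation by $(1+\frac{d}{dt})^2$ is what makes $\Ta(F)$ at most linear in $\dd y$. That linearity is exactly what lets $\dd\Ta(F)$ reduce to the divergence form of the minimal graph system in Theorem~\ref{thm:calibration}.

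The equivariance step you flag as a possible obstacle resolves cleanly. Both sides depend linearly on the chosen orientation of $\BR^n$: the left side through $\operatorname{vol}_{\Sigma}$, the right side through $*$. Now take $\Phi(x,y)=(Px,Qy)$ with $P\in\RO(n)$, $Q\in\RO(m)$, and $F'=QFP^{-1}$. Then $J$ and $\Psi$ are natural under $\Phi$, $h'=QhQ^*$, and both expressions satisfy $\Phi^*(\cdot)(F')=\det P\cdot(\cdot)(F)$. So the reduction is valid for any orthogonal SVD, with no need to restrict to $\RSO$ on either factor. As you note, the dummy indices cause no trouble, since every term involving them carries a factor $\ld_j=0$ on both sides.
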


\begin{proof}
The first step is to express \eqref{form:Theta-tangent-normal} in terms of the SVD coordinates, namely, \(\dd x_j\) and \(\dd y_j\) corresponding to \eqref{eqn:SVD}.  We now show that
\begin{align} \label{form:Theta-SVD}
    \Ta(F) &= \sqrt{\prod_{i=1}^n(1+\ld_i^2)} \Bigl( \bigl(1 - \sum_{j=1}^n\frac{\ld_j^2}{1+\ld_j^2}\bigr) \dd x_1\w\cdots\w\dd x_n + \sum_{k=1}^n (-1)^{k-1}\frac{\ld_k}{1+\ld_k^2}\dd y_k\w\widehat{\dd x_k} \Bigr) ~,
\end{align}
where \(\widehat{\dd x_k} = \dd x_1\w\cdots\w\dd x_{k-1}\w\dd x_{k+1}\w\cdots\w\dd x_n\).  To verify \eqref{form:Theta-SVD}, we plug the basis \eqref{eqn:SVD-frame} into the right-hand side of \eqref{form:Theta-SVD}.  We compute
\begin{align*}
    \Bigl[\text{right-hand side of \eqref{form:Theta-SVD}}\Bigr] (e_{1},\cdots,e_{n})
    &= 1 - \sum_{j=1}^n\frac{\ld_j^2}{1+\ld_j^2} + \sum_{k=1}^n \frac{\ld_k^2}{1+\ld_k^2} = 1 ~,
\end{align*}
and
\begin{align*}
    &\quad\Bigl[\text{right-hand side of \eqref{form:Theta-SVD}}\Bigr] (e_{n+1},e_{n+2},\cdots,e_{n+\ell},e_{\ell+1},e_{\ell+2},\cdots,e_{n}) \\
    &= \bigl(1 - \sum_{j=1}^n\frac{\ld_j^2}{1+\ld_j^2}\bigr)\prod_{k=1}^\ell(-\ld_k) + \sum_{j=1}^\ell\bigl(\frac{\ld_j}{1+\ld_j^2}\frac{\prod_{k=1}^\ell(-\ld_k)}{-\ld_j}\bigr) + \sum_{j=\ell+1}^n\bigl(\frac{\ld_j}{1+\ld_j^2}\ld_j{\prod_{k=1}^\ell(-\ld_k)}\bigr) \\
    &= (-1)^{\ell}\bigl(\prod_{k=1}^\ell\ld_k\bigr) \Bigl( 1 - \sum_{j=1}^\ell\frac{\ld_j^2}{1+\ld_j^2} - \sum_{j=1}^\ell\frac{1}{1+\ld_j^2} \Bigr) = (\ell-1)(-1)^{\ell-1}\ld_1\ld_2\cdots\ld_\ell ~.
\end{align*}
This verifies \eqref{form:Theta-SVD}.


Next utilizing the \(\RSO(n)\times\RO(m)\) invariance of \(\Ta(F)\), we derive its expression in terms of the components of \(F\).
Let \((f_1,\cdots,f_m)\) be the components of \(F:\Om\to\BR^m\).  In the SVD coordinate \eqref{eqn:SVD}, \(\dd f_k = \ld_k\dd x_k\).  Thus, \(*\dd f_k = (-1)^{k-1}\ld_k\widehat{\dd x_k}\), where \(*\) is the Hodge star with respect to the standard metric and orientation on \(\BR^n\).  Hence, the last term in \eqref{form:Theta-SVD} is the pairing between \(\dd y_k\) and \(*\dd f_k\) with the weight \(1/(1+\ld_k^2)\). Let
\begin{align} \label{eqn:normal-metric}
    h_{\af\bt} &= \dt_{\af\bt} + \ip{\dd f_\af}{\dd f_\bt} = \dt_{\af\bt} + \sum_{j=1}^n(\pl_jf_\af)(\pl_jf_\bt)
\end{align}
for \(1\leq\af,\bt\leq m\).  In the SVD coordinate \eqref{eqn:SVD}, \(h_{\af\bt} = (1+\ld_{\af}^2)\dt_{\af\bt}\).  It follows that
\begin{align} \label{form:Theta-h}
    \Ta(F) &= \sqrt{h}\bigl(\tr[(h^{-1})^{\af\bt}] - (m-1)\bigr)(\dd x_1\w\cdots\w\dd x_n) + \sum_{\af,\bt=1}^m \dd y_\af\w\sqrt{h}(h^{-1})^{\af\bt} (*\dd f_\bt) ~,
\end{align}
where \(h = \det(h_{\af\bt}) = \prod_k(1+\ld_k^2)\).  Note that the right-hand side of \eqref{form:Theta-h} is invariant under the \(\RSO(n)\times\RO(m)\) change of coordinates.  In other words, the \(x_i\) and \(y_\af\) coordinates on the right-hand side of \eqref{form:Theta-h} need not be the SVD coordinates \eqref{eqn:SVD}.
\end{proof}

\subsection{The closedness of $\Theta(F)$ and the minimal graph system}

 We recall Sylvester's determinant identity and some classical results in linear algebra.
\begin{lem} \label{lem:linear-alg}
    For an \(m\times n\) matrix \(S\), let \(g(S) = \bfI_n + S^*S\) and \(h(S) = \bfI_m + SS^*\).  Then,
    \begin{align*}
        \det(g(S)) &= \det(h(S)) ~, & h(S)^{-1} = \bfI_m - S\,g(S)^{-1}\,S^* ~, \\
        \tr(h(S)^{-1}) - m &= \tr(g(S)^{-1}) - n ~, & g(S)^{-1} = \bfI_n - S^*\,h(S)^{-1}\,S ~.
    \end{align*}
    Moreover, \(h(S)^{-1}S = Sg(S)^{-1}\).
\end{lem}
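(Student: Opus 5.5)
The plan is to prove the two inversion identities by direct multiplication, then obtain the determinant and trace identities from the singular value decomposition of \(S\), exactly as in \eqref{eqn:SVD}. The intertwining identity \(h(S)^{-1}S = Sg(S)^{-1}\) is where I would start, since the others follow from it.

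\emph{Intertwining identity.} Observe that
\[
S\,g(S) = S + SS^*S = h(S)\,S .
\]
Both \(g(S)\) and \(h(S)\) are invertible, because they are positive definite: for instance \(\ip{g(S)v}{v} = |v|^2 + |Sv|^2\). Multiplying on the left by \(h(S)^{-1}\) and on the right by \(g(S)^{-1}\) gives \(h(S)^{-1}S = S g(S)^{-1}\).

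\emph{Inverse formulas.} To check \(h(S)^{-1} = \bfI_m - S g(S)^{-1}S^*\), multiply the right-hand side by \(h(S)\):
\[
(\bfI_m - Sg^{-1}S^*)(\bfI_m + SS^*) = \bfI_m + SS^* - Sg^{-1}(\bfI_n + S^*S)S^* = \bfI_m + SS^* - SS^* = \bfI_m .
\]
Here \(g = g(S)\), and the key step is recognising \(S^* + S^*SS^* = g S^*\). The formula for \(g(S)^{-1}\) follows symmetrically by exchanging the roles of \(S\) and \(S^*\).

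\emph{Determinant and trace.} Write the singular value decomposition \(S = U\Sigma V^*\), with \(U\in\RO(m)\), \(V\in\RO(n)\), and \(\Sigma\) an \(m\times n\) diagonal matrix whose nonzero entries are \(\ld_1,\ldots,\ld_r\). Then \(g(S)\) is conjugate to \(\mathrm{diag}(1+\ld_1^2,\ldots,1+\ld_r^2,1,\ldots,1)\) of size \(n\). Likewise \(h(S)\) is conjugate to the same kind of matrix of size \(m\), with the same nontrivial eigenvalues \(1+\ld_i^2\) and all remaining eigenvalues equal to \(1\). Hence
\[
\det g(S) = \det h(S) = \prod_{i=1}^r (1+\ld_i^2).
\]
Taking traces of the inverses gives \(\tr g(S)^{-1} = \sum_{i=1}^r (1+\ld_i^2)^{-1} + (n-r)\), and the analogous expression with \(m-r\) for \(h(S)^{-1}\). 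Subtracting \(n\) and \(m\) respectively, both equal \(\sum_{i=1}^r \bigl((1+\ld_i^2)^{-1} - 1\bigr)\).

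\emph{Obstacles.} There is no real obstacle here. The only points needing care are the bookkeeping of the dimensions \(m\) and \(n\) in the padding eigenvalues, and the invertibility of \(g(S)\) and \(h(S)\), which follows from positive definiteness.
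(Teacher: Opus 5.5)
Your proof is correct and complete. The paper omits its proof, saying only that it relies on ``Sylvester's determinant identity and some classical results in linear algebra,'' so the main point of comparison is the determinant identity. The paper would quote Sylvester's identity \(\det(\bfI_n + BA) = \det(\bfI_m + AB)\), valid for arbitrary \(A\in\BR^{m\times n}\), \(B\in\BR^{n\times m}\). You instead use the singular value decomposition, which is special to \(B = A^*\). In exchange it shows that \(g(S)\) and \(h(S)\) have the same eigenvalues \(1+\ld_i^2\) up to padding by \(1\)'s, so the determinant and trace identities come out together; this is the same computation the paper does in the proof of Proposition~\ref{prop:calibration}.

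Two small remarks. First, as your stated plan suggests, the inverse formula follows in one line from the intertwining identity: \(S\,g(S)^{-1}S^* = h(S)^{-1}SS^* = h(S)^{-1}\bigl(h(S)-\bfI_m\bigr) = \bfI_m - h(S)^{-1}\). The separate multiplication check is therefore unnecessary. Second, the trace identity then needs no spectral information. By cyclicity, \(\tr\bigl(S\,g(S)^{-1}S^*\bigr) = \tr\bigl(g(S)^{-1}(g(S)-\bfI_n)\bigr) = n - \tr\bigl(g(S)^{-1}\bigr)\), hence \(\tr(h(S)^{-1}) - m = \tr(g(S)^{-1}) - n\). So only the determinant identity needs Sylvester's identity or your SVD argument.
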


The proof of this lemma will be omitted.  We are ready to identify when $d\Theta(F)=0$.

\begin{thm} \label{thm:calibration}
    For \(F = (f_1,\cdots,f_m):\Om\subset\BR^n\to\BR^m\), the \(n\)-form \(\Ta(F)\) on \(\Om\times\BR^m\) defined in Definition~\ref{defn:pre-calibration} is closed if and only if the graph of \(F\) is a minimal submanifold.
\end{thm}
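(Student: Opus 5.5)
We start from the coordinate expression \eqref{form:Theta-h-0} of Proposition~\ref{prop:calibration}. Its coefficients depend only on \(x\in\Om\), and the only \(\dd y\)'s appearing are the constant forms \(\dd y_\af\). Hence \(\dd\Ta(F)\) only involves \(x\)-derivatives of the coefficients. The first term \(\sqrt{h}(\tr h^{-1}-(m-1))\,\dd x_1\w\cdots\w\dd x_n\) is a top-degree form in \(x\) with \(x\)-dependent coefficient, so it is closed. It remains to compute
\begin{align*}
\dd\Ta(F) = -\sum_{\af=1}^m \dd y_\af\w \dd\Bigl(\sum_{\bt}\sqrt{h}(h^{-1})^{\af\bt}*\dd f_\bt\Bigr).
\end{align*}
Writing \(*\dd f_\bt=\sum_k(\pl_k f_\bt)\,*\dd x_k\) and using \(\dd(u\,*\dd x_k)=(\pl_k u)\,\dd x_1\w\cdots\w\dd x_n\), we obtain
\begin{align*}
\dd\Ta(F) = -\sum_{\af}\Bigl(\sum_{k}\pl_k\bigl(\textstyle\sum_\bt\sqrt{h}(h^{-1})^{\af\bt}\pl_kf_\bt\bigr)\Bigr)\dd y_\af\w\dd x_1\w\cdots\w\dd x_n .
\end{align*}
The forms \(\dd y_\af\w\dd x_1\w\cdots\w\dd x_n\) are linearly independent. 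Therefore \(\dd\Ta(F)=0\) if and only if
\begin{align*}
\sum_k\pl_k\Bigl(\sum_\bt\sqrt{h}(h^{-1})^{\af\bt}\pl_kf_\bt\Bigr)=0\quad\text{for each }\af.
\end{align*}

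Next we identify this system with the minimal graph system \eqref{eqn:minimal-graph}. Set \(S=\dd F\), the \(m\times n\) matrix \((\pl_kf_\af)\), so that \(g=g(S)\) and \(h=h(S)\) in the notation of Lemma~\ref{lem:linear-alg}. The lemma gives \(\det g=\det h\), hence \(\sqrt g=\sqrt h\). It also gives \(h^{-1}S=Sg^{-1}\), so
\begin{align*}
\sum_\bt (h^{-1})^{\af\bt}\pl_kf_\bt=(h^{-1}S)_{\af k}=(Sg^{-1})_{\af k}=\sum_j(\pl_jf_\af)(g^{-1})^{jk}.
\end{align*}
Substituting, the condition becomes exactly \(\sum_{j,k}\pl_k\bigl(\sqrt g(g^{-1})^{jk}\pl_jf_\af\bigr)=0\), which is \eqref{eqn:minimal-graph} by the symmetry of \(g^{-1}\).

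Finally we invoke the classical fact that, for smooth \(F\), the minimal graph system is equivalent to vanishing mean curvature of the graph. It is the Euler--Lagrange equation of \(\int\sqrt{g}\) under vertical variations, as stated after the Definition of the system. To make this fully rigorous, note that the vertical variation gives the normal components of the mean curvature vector paired against the normal space \(\{0\}\times\BR^m\) projected to \(N\Sm\). Since that projection is surjective onto \(N\Sm\), vanishing of the Euler--Lagrange expressions is equivalent to \(H=0\).

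The main obstacle is purely bookkeeping: getting the signs in \(\dd(\dd y_\af\w\eta)=-\dd y_\af\w\dd\eta\) and in \(\dd(*\dd x_k)\) right. Signs are irrelevant to the equivalence anyway, since each equation only needs to vanish. The genuinely substantive step, the identity \(\sqrt h\,h^{-1}S=\sqrt g\,Sg^{-1}\), is supplied by Lemma~\ref{lem:linear-alg}.
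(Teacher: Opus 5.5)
Your proof is correct and follows essentially the same route as the paper. Both compute $\dd\Ta(F)$ from the coordinate expression \eqref{form:Theta-h-0} and use Lemma~\ref{lem:linear-alg} ($\sqrt h=\sqrt g$ and $h^{-1}S=Sg^{-1}$) to identify closedness with the minimal graph system; the only differences are that the paper rewrites $\Ta(F)$ in the $g$-form \eqref{form:Theta-g} before differentiating while you differentiate first, and that you explicitly justify the equivalence of that system with $H=0$ (via $\pi^{N\Sm}$ mapping $\{0\}\times\BR^m$ isomorphically onto $N\Sm$), which the paper takes for granted.
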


\begin{proof}

Since the coefficient function of \(\dd x_1\w\cdots\w\dd x_n\) in \eqref{form:Theta-h} depends only on \(x\), \(\dd\Ta(F) = -\sum_{\af,\bt=1}^m \dd y_\af\w\dd\bigl[\sqrt{h}(h^{-1})^{\af\bt} (*\dd f_\bt)\bigr]\).  We will relate it to the geometry of the graph \(\Sm\).

Let \(g_{jk}\) be the induced metric on \(\Sm\):
\begin{align}
    g_{jk} = \dt_{jk} + \sum_{\af=1}^m(\pl_jf_\af)(\pl_kf_\af) \quad\text{and}\quad g = \det(g_{jk}) ~.
\end{align}
It turns out that \eqref{form:Theta-h} can be rewritten using \(g_{jk}\). 
Applying Lemma~\ref{lem:linear-alg} with \(S = \dd F\), we find that
\[ \tr[(h^{-1})^{\af\bt}] - m = \tr[(g^{-1})^{jk}] - n \quad\text{and}\quad \sum_{\bt=1}^m (h^{-1})^{\af\bt}(\pl_jf_\bt) = \sum_{k=1}^n(g^{-1})^{jk}(\pl_kf_\af) ~. \]
Hence,
\begin{align*}
    \sum_{\bt=1}^m (h^{-1})^{\af\bt}(*\dd f_\bt) &= *\Bigl[ \sum_{k=1}^n(g^{-1})^{jk}(\pl_kf_\af) \dd x_j\Bigr]
    = \bigl(\sum_{j=1}^n\sqrt{g}(g^{-1})^{jk} (\pl_k f_\af)\frac{\pl}{\pl x_j}\bigr) \mathbin\lrcorner(*1)
\end{align*}
where \(*\) is the Hodge star with respect to the standard metric and orientation on \(\BR^n\).  It follows that
\begin{align} \label{form:Theta-g}
    \Ta(F) & = \sqrt{g}\bigl(\tr[(g^{-1})^{jk}] - (n-1)\bigr)(*1) + \sum_{\af=1}^m \dd y_\af\w\Bigl[ \bigl(\sum_{i,j=1}^n\sqrt{g}(g^{-1})^{jk} (\pl_k f_\af)\frac{\pl}{\pl x_j}\bigr) \mathbin\lrcorner(*1) \Bigr] ~.
\end{align}

Therefore, \(\dd\Ta(F) = 0\) if and only if
\begin{align} \label{eqn:minimal-standard}
    \sum_{j,k=1}^n \pl_j\bigl( \sqrt{g}(g^{-1})^{jk} (\pl_k f_\af) \bigr) &= 0
    \quad\text{for }\af = 1,\ldots,m ~,
\end{align}
which is exactly the \emph{minimal graph system} \eqref{eqn:minimal-graph} of \(F = (f_1,\cdots,f_m)\).
\end{proof}

\begin{rmk}
    In the hypersurface case, \(m=1\), \(\Ta(F)\) is the parallel transport of the volume form of its graph along the \(y\)-direction, and \eqref{form:Theta-g} reads
    \begin{align*}
        \Ta(f) &= \frac{1}{\sqrt{1+|\dd f|^2}}(*1) + \dd y \w\bigl(\frac{1}{\sqrt{1+|\dd f|^2}}*\dd f\bigr) ~.
    \end{align*}
\end{rmk}

\section{Estimating the Comass} \label{sec:comass}

Theorem~\ref{thm:calibration} characterizes when \(\dd\Ta(F) = 0\). The goal of this section is to identify conditions under which \(\Ta(F)\) has comass one. 

\begin{defn}
    The comass of a differential $n$-form $\Theta$ on Euclidean space is the supremum of the values of \(\Theta\) over all oriented unit \(n\)-planes.
\end{defn}

Therefore, estimating an upper bound of comass becomes an optimization problem over the Grassmannian.

Evaluating the comass is a pointwise computation, and we will use the expression \eqref{form:Theta-tangent-normal}.  Note that if \(\rank \dd F \leq 1\) everywhere, then \(\Ta(F)\) has comass equal to \(1\).

We adopt the SVD basis \eqref{eqn:SVD-frame} to do the calculation.  Denote by \(\rankdf\) the rank of \(\dd F|_{\pi_1(p)}\); we have \(\ld_i = 0\) for \(i>\rankdf\).  Let \(A\) be a \(2\rankdf\times\rankdf\) matrix whose columns form an orthonormal set.  Namely, \(A^* A = \bfI_\rankdf\).  Write
\begin{align*}
    A = \begin{bmatrix} U \\ V \end{bmatrix} ~\text{ for }U,V\in\BR^{\rankdf\times\rankdf} ~,~
    \text{ then } U^* U + V^* V = \bfI_\rankdf ~.
\end{align*}
Let \(u_j\) be the \(j^{\text{th}}\) row of \(U\), and \(v_j\) be the \(j^{\text{th}}\) row of \(V\).
For \(\CI = \{1\leq i_1<\cdots<i_\ell\leq\rankdf\}\), denote by \(A_\CI\) the \(\rankdf\times\rankdf\) matrix whose \(i^{\text{th}}\) row is \(v_j\) if \(j\in\CI\), and is \(u_j\) if \(j\notin\CI\).  It is clear that \(\det(A_\CI)\) is the evaluation of \(\Dt_\CI\) (see \eqref{form:special-basis}) on the column space of \(A\).  This immediately leads to the following lemma.

\begin{lem} \label{lem:comass-Stiefel}
    For \(\ld_1\geq\cdots\geq\ld_\rankdf\geq 0\), the comass of \eqref{form:Theta-tangent-normal} is the maximum of
    \begin{align*}
        \det(U) - \sum_{\ell=2}^\rankdf(-1)^\ell(\ell-1)\sum_{\CI:|\CI|=\ell}(\prod_{j\in \CI}\ld_j) \det(A_\CI)
    \end{align*}
    over row vectors \(u_1,\ldots,u_\rankdf,v_1,\ldots,v_\rankdf\) with \(\sum_{j=1}^\rankdf (u_j^* u_j + v_j^* v_j) = \bfI_\rankdf\).
\end{lem}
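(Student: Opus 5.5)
The plan is to reduce the comass computation on \(\Ld^n(\BR^{n+m})\) to the \(2\rankdf\)-dimensional subspace \(E=\spn\{e_1,\ldots,e_\rankdf,e_{n+1},\ldots,e_{n+\rankdf}\}\), and then to parametrize unit \(\rankdf\)-planes there by matrices with orthonormal columns.

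\textbf{Step 1 (factorization).} Since \(\ld_i=0\) for \(i>\rankdf\), only index sets \(\CI\subset\{1,\ldots,\rankdf\}\) appear in \eqref{form:Theta-tangent-normal}. Write \(W=\spn\{e_{\rankdf+1},\ldots,e_n\}\). Then
\[
\Ta(F)=\phi\w\om^{\rankdf+1}\w\cdots\w\om^n ,
\]
where \(\phi\) is a \(\rankdf\)-form built only from \(\om^j,\om^{n+j}\) with \(j\le\rankdf\). In particular \(\phi\) lives on \(E\), and \(\om^{\rankdf+1}\w\cdots\w\om^n\) is the unit volume form of \(W\). The subspaces \(E\) and \(W\) are orthogonal, and the \(\om^{n+j}\) with \(j>\rankdf\) do not appear at all.

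\textbf{Step 2 (comass of a split product).} I will use the standard fact that if \(\phi\) is a form on \(E\) and \(\psi\) is the unit volume form of \(W\), with \(E\perp W\), then the comass of \(\phi\w\psi\) on \(E\op W\op(\text{rest})\) equals the comass of \(\phi\) on \(E\). The inequality \(\geq\) is seen by taking \(\xi\w e_{\rankdf+1}\w\cdots\w e_n\). For \(\leq\), I would first project a unit simple \(n\)-vector onto \(E\op W\); this does not increase its mass, and the forms vanish on the complement. The decomposition then reduces to the classical fact that the comass of a product of forms on orthogonal factors is at most the product of the comasses, for instance via Federer's inequality \(\|\phi\w\psi\|\le\|\phi\|\,\|\psi\|\) when one factor is simple (here \(\psi\) is simple). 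This is the step I expect to need the most care. Citing the simple-factor comass inequality is the cleanest route.

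\textbf{Step 3 (parametrization).} An oriented unit \(\rankdf\)-plane in \(E\) is the column space of a \(2\rankdf\times\rankdf\) matrix \(A\) with \(A^*A=\bfI_\rankdf\), written in the basis \(e_1,\ldots,e_\rankdf,e_{n+1},\ldots,e_{n+\rankdf}\). Here \(U\) holds the tangential coordinates and \(V\) the normal ones; the value of \(\phi\) is independent of the choice of oriented orthonormal basis of the plane. Evaluating \(\Dt_\CI\) restricted to \(E\) gives the determinant of the \(\rankdf\times\rankdf\) minor that takes row \(j\) from \(V\) if \(j\in\CI\) and from \(U\) otherwise, namely \(\det(A_\CI)\); the empty set gives \(\det U\). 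By linearity the value of \(\phi\) on the plane is the stated expression.

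\textbf{Step 4 (conclusion).} The orthonormality condition \(U^*U+V^*V=\bfI_\rankdf\) is precisely \(\sum_j(u_j^*u_j+v_j^*v_j)=\bfI_\rankdf\). This set is compact (a Stiefel manifold) and the expression is continuous in it, so the supremum is attained and is a maximum.
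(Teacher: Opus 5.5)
Your proposal is correct and follows the paper's route. The paper likewise parametrizes oriented unit $\rankdf$-planes in $\spn\{e_1,\ldots,e_\rankdf,e_{n+1},\ldots,e_{n+\rankdf}\}$ by $2\rankdf\times\rankdf$ matrices with orthonormal columns and reads off $\Dt_\CI$ as $\det(A_\CI)$, but it treats the lemma as immediate; your Steps 1--2 (factoring off $\om^{\rankdf+1}\w\cdots\w\om^n$, projecting away the unused directions, and applying the simple-factor comass inequality) supply the reduction from $n$-planes in $\BR^{n+m}$ to $\rankdf$-planes in that $2\rankdf$-dimensional subspace, which the paper leaves implicit.
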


To elaborate, note that
\begin{align} \label{eqn:trace-r}
    \rankdf &= \tr(U^* U + V^* V) = \tr(U U^*) + \tr(V V^*) = \sum_{j=1}^\rankdf |u_j|^2 + \sum_{k=1}^\rankdf |v_k|^2 ~.
\end{align}
It is convenient to write \({\sum_{j=1}^\rankdf|u_j|^2}/{\rankdf}\) as \((\cos\ta)^2\), and \({\sum_{j=1}^\rankdf|v_j|^2}/{\rankdf}\) as \((\sin\ta)^2\), where \(\ta\in[0,\frac{\pi}{2}]\).

By the Hadamard inequality and the AM-GM inequality,
\begin{align} \label{eqn:det-U}
    \det(U) \leq \prod_{j=1}^\rankdf|u_j|^2 \leq \Bigl(\frac{\sum_{j=1}^\rankdf |u_j|^2}{\rankdf}\Bigr)^{\frac{\rankdf}{2}} = (\cos\ta)^\rankdf ~.
\end{align}
Together with the Cauchy--Schwarz inequality, for any \(1\leq\ell\leq\rankdf\),
\begin{align*}
    \sum_{\CI:|\CI|=\ell}\det(A_\CI) &\leq \sum_{\CI:|\CI|=\ell} \bigl(\prod_{j\notin\CI}|u_j|\bigr)\bigl(\prod_{k\in \CI}|v_k|\bigr) \leq \bigl(\sum_{\CI:|\CI|=\ell}\prod_{j\notin \CI}|u_j|^2\bigr)^{\frac{1}{2}} \bigl(\sum_{\CI:|\CI|=\ell}\prod_{k\in\CI}|v_k|^2\bigr)^{\frac{1}{2}} ~.
\end{align*}
With the help of the Maclaurin inequality,
\begin{align} \label{eqn:det-LA}
    \sum_{\CI:|\CI|=\ell}\det(A_\CI) &\leq \binom{\rankdf}{\ell}\Bigl(\frac{\sum_{j=1}^\rankdf |u_j|^2}{\rankdf}\Bigr)^{\frac{\rankdf-\ell}{2}}\Bigl(\frac{\sum_{k=1}^\rankdf |v_k|^2}{\rankdf}\Bigr)^{\frac{\ell}{2}}  = \binom{\rankdf}{\ell} (\cos\ta)^{\rankdf-\ell} (\sin\ta)^\ell ~.
\end{align}
Applying \eqref{eqn:det-U} and \eqref{eqn:det-LA} to Lemma~\ref{lem:comass-Stiefel} leads to the following proposition.

\begin{prop} \label{prop:comass-rough-bound}
    Let \(\rankdf = \rank(\dd F|_{\pi_1(p)})\), and let \(\ld_1\geq\cdots\geq\ld_\rankdf> 0\) be the singular values of \(\dd F|_{\pi_1(p)}\).  The comass of \eqref{form:Theta-tangent-normal} is no greater than
    \begin{align} \label{eqn:comass-rough-bound}
        \max \Bigl\{ (\cos\ta)^\rankdf + \sum_{\ell=2}^\rankdf \Ld_\ell (\ell-1) \binom{\rankdf}{\ell} (\cos\ta)^{\rankdf-\ell} (\sin\ta)^\ell : {\ta\in[0,\frac{\pi}{2}]} \Bigr\} ~,
    \end{align}
    where \(\Ld_\ell = \max\{\prod_{j\in\CI}\ld_j:|\CI|=\ell\}\).
\end{prop}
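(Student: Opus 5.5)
The plan is to bound, for each fixed oriented unit \(n\)-plane, the expression in Lemma~\ref{lem:comass-Stiefel}, and then take the supremum. First I reduce to the \(\rankdf\)-dimensional block. Since \(\ld_j=0\) for \(j>\rankdf\), the form \eqref{form:Theta-tangent-normal} factors as \(\ta_\rankdf\w\om^{\rankdf+1}\w\cdots\w\om^n\). Here \(\ta_\rankdf\) is a \(\rankdf\)-form built only from \(\om^1,\dots,\om^\rankdf,\om^{n+1},\dots,\om^{n+\rankdf}\). The comass of a wedge product of forms in orthogonal sets of variables, where one factor is a unit simple form, equals the comass of the other factor. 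So it suffices to bound the comass of \(\ta_\rankdf\) on \(\BR^{2\rankdf}\). That is precisely the maximization in Lemma~\ref{lem:comass-Stiefel} over \(A=\begin{bmatrix}U\\V\end{bmatrix}\) with \(A^*A=\bfI_\rankdf\).

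Next, for a given such \(A\), I define \(\ta\in[0,\frac{\pi}{2}]\) by \(\cos^2\ta=\frac1\rankdf\sum_j|u_j|^2\). By \eqref{eqn:trace-r}, this gives \(\sin^2\ta=\frac1\rankdf\sum_j|v_j|^2\). I then bound each term separately.

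The leading term \(\det U\) is at most \((\cos\ta)^\rankdf\) by \eqref{eqn:det-U}, which uses Hadamard and AM--GM.

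For \(\ell\ge2\), the term \(-(-1)^\ell(\ell-1)\sum_{|\CI|=\ell}(\prod_{j\in\CI}\ld_j)\det(A_\CI)\) has a sign I do not control. So I bound it by its absolute value:
\[
(\ell-1)\,\Ld_\ell\sum_{|\CI|=\ell}|\det A_\CI| .
\]
The estimate \eqref{eqn:det-LA} applies verbatim to \(\sum|\det A_\CI|\), since each step uses Hadamard on \(|\det A_\CI|\), then Cauchy--Schwarz, then Maclaurin. This gives the bound \(\binom{\rankdf}{\ell}(\cos\ta)^{\rankdf-\ell}(\sin\ta)^\ell\).

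Summing these bounds shows that the value on the plane spanned by \(A\) is at most the function of \(\ta\) inside the max in \eqref{eqn:comass-rough-bound}. Maximizing over \(\ta\) then gives the claim. The maximum exists because the function is continuous on a compact interval.

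The main point needing care is the Maclaurin step. I would apply it to \(a_j=|u_j|^2\) and \(b_k=|v_k|^2\), with elementary symmetric means bounded by powers of the arithmetic mean. This requires checking two inequalities:
\[
\sum_{|\CI|=\ell}\prod_{j\notin\CI}a_j\le\binom{\rankdf}{\ell}\bar a^{\,\rankdf-\ell},
\qquad
\sum_{|\CI|=\ell}\prod_{k\in\CI} b_k\le\binom{\rankdf}{\ell}\bar b^{\,\ell}.
\]
Both hold because \(\binom{\rankdf}{\rankdf-\ell}=\binom{\rankdf}{\ell}\). Taking square roots then yields the stated binomial factor.

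The other point needing care is the factorization reduction to \(\BR^{2\rankdf}\). If that is awkward to justify directly, I would instead apply Lemma~\ref{lem:comass-Stiefel} as given, which the paper already phrases in terms of the \(\rankdf\times\rankdf\) blocks.
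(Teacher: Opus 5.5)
Your proposal is correct and follows essentially the same route as the paper. The paper also bounds $\det U\le(\cos\ta)^\rankdf$ by Hadamard and AM--GM, bounds the $\Dt_\CI$-terms by Hadamard, Cauchy--Schwarz and Maclaurin to get $\binom{\rankdf}{\ell}(\cos\ta)^{\rankdf-\ell}(\sin\ta)^\ell$, and inserts both into Lemma~\ref{lem:comass-Stiefel}. Two points the paper passes over silently are handled explicitly in your version: you pass to absolute values, replacing the signed sum $\sum_{\CI}(\prod_{j\in\CI}\ld_j)\det(A_\CI)$ by $\Ld_\ell\sum_{\CI}|\det(A_\CI)|$, and you justify the reduction to the $2\rankdf$-dimensional block.
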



\begin{thm} \label{thm:2-dilation-crude}
Let $F:\Omega\subset \mathbb{R}^n\rightarrow \mathbb{R}^m$ be a smooth map whose
graph is a minimal submanifold of $\mathbb{R}^{n+m}$. Suppose that \( \rankdf = \sup_{\bx\in\Om}\bigl(\rank \dd F|_{\bx}\bigr) \geq 2 \). If the singular values $\{\lambda_i\}$ of \(\dd F\) satisfy
    \begin{align} \label{eqn:2-dilation-crude}
        \ld_j\ld_k \leq \frac{1}{(\rankdf-1)^2}
    \end{align}
    for any \(1\leq j<k\leq \rankdf\) and at every point of \(\Om\), then  \(\Ta(F)\) is a calibration.
\end{thm}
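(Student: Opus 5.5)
Closedness is already settled by Theorem~\ref{thm:calibration}, since the graph is minimal, so $\dd\Ta(F)=0$. What remains is the pointwise bound: comass$(\Ta(F))\le 1$ at every point of $\Om\times\BR^m$. Because $\Ta(F)$ is obtained by parallel transport along $\BR^m$, it suffices to check this at points $p\in\Sm$. There we use the SVD expression \eqref{form:Theta-tangent-normal} together with Proposition~\ref{prop:comass-rough-bound}.

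At a point where the rank is $r\le 1$, the comass is $1$, as noted above. At a point of rank $r$ with $2\le r\le\rankdf$, the hypothesis \eqref{eqn:2-dilation-crude} holds for all pairs among $\ld_1,\dots,\ld_r$. Set $\mu=\frac{1}{\rankdf-1}$, so that $\ld_j\ld_k\le\mu^2$.

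The first step is to bound $\Ld_\ell\le\mu^\ell$. For even $\ell$, pair up the factors. For odd $\ell\ge3$, use the ordering $\ld_1\ge\ld_2\ge\cdots$: the product of the $\ell$ largest values is $\ld_1\ld_2\ld_3\cdots$. Group all factors except $\ld_1$ into pairs, giving a bound $\mu^{\ell-1}\ld_1$. Then use $\ld_1\ld_\ell\le\mu^2$ together with $\ld_\ell\le \ld_k$, or more simply $\Ld_\ell^2=\prod_{j\in\CI}\ld_j^2\le\prod_{\text{pairs}}(\ld_j\ld_k)$ summed cyclically: $\prod_{j\in \CI}\ld_j^2=\prod_{j}\ld_j\ld_{j+1}$ (cyclic) $\le\mu^{2\ell}$. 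This gives $\Ld_\ell\le\mu^\ell$.

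The second step plugs this into \eqref{eqn:comass-rough-bound}. With $c=\cos\ta$, $s=\sin\ta$ and $t=\mu s$, the bound becomes
\[
\phi(\ta)=c^r+\sum_{\ell=2}^r(\ell-1)\binom{r}{\ell}c^{r-\ell}(\mu s)^\ell .
\]
Using $\sum_\ell(\ell-1)\binom r\ell c^{r-\ell}t^\ell = t\,\partial_t(c+t)^r-(c+t)^r+c^r$, this simplifies to
\[
\phi(\ta)=rt(c+t)^{r-1}-(c+t)^r+2c^r-\text{(correction)}.
\]
It is cleaner to write it directly as
\[
\phi=c^r+(c+t)^{r-1}\bigl((r-1)t-c\bigr)+c^r .
\]
One should double-check this identity: at $\ell=0$ the term $(\ell-1)\binom r\ell c^r t^0$ equals $-c^r$, and at $\ell=1$ it vanishes. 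So the sum over $\ell\ge 2$ equals $(c+t)^{r-1}\bigl((r-1)t-c\bigr)+c^r$, and hence
\[
\phi=2c^r+(c+t)^{r-1}\bigl((r-1)t-c\bigr).
\]

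The main obstacle is showing $\phi\le1$ on $[0,\pi/2]$ when $\mu\le\frac1{r-1}$. By monotonicity in $\mu$ (all coefficients are nonnegative), it suffices to take $\mu=\frac1{\rankdf-1}\le\frac1{r-1}$, and then the worst case is $\mu=\frac1{r-1}$. In that case $(r-1)t=s$, so
\[
\phi=2c^r+\Bigl(c+\tfrac{s}{r-1}\Bigr)^{r-1}(s-c).
\]
At $\ta=0$ this gives $2-1=1$. At $\ta=\pi/2$ it gives $(r-1)^{-(r-1)}\le1$. For interior points, the plan is to differentiate in $\ta$ and show $\phi'\le0$ near $0$ with no interior maximum exceeding $1$. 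If direct calculus is messy, the fallback is to use $c+\frac{s}{r-1}\le\sqrt{1+(r-1)^{-2}}$ and split into the cases $s\le c$, where the second term is nonpositive and $2c^r-(\ldots)$ must be compared carefully, and $s\ge c$, where $c^r\le 2^{-r/2}$. If necessary, one then verifies the resulting one-variable inequality by an induction on $r$. Once $\phi\le1$ is established, $\Ta(F)$ is closed with comass at most one, so it is a calibration.
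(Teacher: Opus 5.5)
\paragraph{Verdict: gap at the central step.} Several of your reduction steps are sound and match the paper:
\begin{itemize}
\item closedness from Theorem~\ref{thm:calibration};
\item reduction to points of $\Sigma$;
\item the bound $\Lambda_\ell\le\mu^\ell$, where your cyclic pairing $\prod_{j\in\mathcal{I}}\lambda_j^2=\prod_k\lambda_{i_k}\lambda_{i_{k+1}}\le\mu^{2\ell}$ correctly proves what the paper merely asserts;
\item the closed form $\phi=2c^r+(c+t)^{r-1}\bigl((r-1)t-c\bigr)$;
\item monotonicity in $\mu$.
\end{itemize}

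The inequality that actually constitutes the theorem, $\phi(\theta)\le 1$ for $0<\theta<\frac{\pi}{2}$, is never proved. Checking the two endpoints says nothing about interior maxima. ``Differentiate and show no interior maximum exceeds $1$'' is a plan, not an argument.

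The fallback you describe also fails as stated. In the region $s\ge c$ you replace $c^r$ by $2^{-r/2}$, $c+\frac{s}{r-1}$ by $\sqrt{1+(r-1)^{-2}}$, and $s-c$ by $1$. This decouples quantities that must compensate each other. It yields $2^{1-r/2}+\bigl(1+(r-1)^{-2}\bigr)^{(r-1)/2}$, which exceeds $1$ for every $r\ge 2$. For $r=2$ it gives $1+\sqrt{2}$, even though $\phi\equiv 1$ there.

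\paragraph{The missing idea.} What is needed is a termwise comparison with $\cos^2\theta+\sin^2\theta$, not calculus. Since $r\ge 2$ and every term of the sum has $\ell\ge 2$, and $0\le c,s\le 1$, you have $c^r\le c^2$ and $c^{r-\ell}s^\ell\le s^2$. Hence
\[
\phi(\theta)\le c^2+s^2\sum_{\ell=2}^r(\ell-1)\binom{r}{\ell}\mu^\ell .
\]
Your own identity, evaluated at $c=1$ and $t=\mu$, gives
\[
\sum_{\ell=2}^r(\ell-1)\binom{r}{\ell}\mu^\ell=(1+\mu)^{r-1}\bigl((r-1)\mu-1\bigr)+1 .
\]
This is at most $1$ exactly when $\mu\le\frac{1}{r-1}$, with equality at $\mu=\frac{1}{r-1}$. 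Therefore $\phi\le c^2+s^2=1$, which is precisely the paper's two-line proof.
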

\begin{proof}
    If \(\ld_j\ld_k \leq \tau^2\) for any \(j\neq k\), then \(\Ld_\ell \leq \tau^\ell\) for any \(\ell\geq 2\). For any \(\tau > 0\), let
    \begin{align}
        \sff(\ta,\tau) &= (\cos\ta)^\rankdf + \sum_{\ell=2}^\rankdf \tau^\ell (\ell-1) \binom{\rankdf}{\ell} (\cos\ta)^{\rankdf-\ell} (\sin\ta)^\ell  \label{eqn:f-kappa-rankdf}
    \end{align}
    With the condition \eqref{eqn:2-dilation-crude}, it suffices to show that \(\sff(\ta,\frac{1}{\rankdf-1})\leq 1\) for \(0\leq\ta\leq\frac{\pi}{2}\).  We compute
    \begin{align*}
        \sff\bigl(\ta,\frac{1}{\rankdf-1}\bigr) &\leq (\cos\ta)^2 + \Bigl[\sum_{\ell=2}^\rankdf \frac{1}{(\rankdf-1)^\ell} (\ell-1) \binom{\rankdf}{\ell} \Bigr] (\sin\ta)^2 \\
        &= (\cos\ta)^2 + (\sin\ta)^2 = 1 ~.
    \end{align*}
    This completes the proof.
\end{proof}

The Taylor series expansion of \eqref{eqn:f-kappa-rankdf} at \(\ta = 0\) is \( 1 + \frac{1}{2}\rankdf \bigl( (\rankdf - 1)\tau^2 - 1 \bigr)\ta^2 + O(\ta^3)\).  This suggests that \(\ld_j\ld_k \leq \frac{1}{\rankdf - 1}\) is a necessary condition for \(\Ta(F)\) having comass one.  In the following theorem, we prove that this condition, up to replacing \(1\) by a smaller absolute constant, is sufficient.

\begin{thm} \label{thm:2-dilation-refined}
    There exists a constant \(\vep > 0\) with the following property.  For any minimal map \(F: \Om\subset\BR^n\to\BR^m\) with \( \rankdf = \sup_{\bx\in\Om}\bigl(\rank \dd F|_{\bx}\bigr) \geq 2 \), if the singular values of \(\dd F\) satisfy
    \begin{align} \label{eqn:2-dilation-refined}
        \ld_j\ld_k \leq \frac{\vep}{\rankdf-1}
    \end{align}
    for any \(1\leq j<k\leq \rankdf\) and at every point of \(\Om\), then \(\Ta(F)\) is a calibration form.
\end{thm}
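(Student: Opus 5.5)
The plan is to bound the maximum in Proposition~\ref{prop:comass-rough-bound} by $1$ using a closed form for $\sff$ and a two-regime comparison. Condition \eqref{eqn:2-dilation-refined} gives $\Ld_\ell\le\tau^\ell$ for all $\ell\ge2$, where $\tau^2=\vep/(\rankdf-1)$. As in the proof of Theorem~\ref{thm:2-dilation-crude}, it then suffices to show that $\sff(\ta,\tau)\le1$ for $\ta\in[0,\frac{\pi}{2}]$, with $\sff$ as in \eqref{eqn:f-kappa-rankdf}. Here $\rankdf$ can be taken to be the pointwise rank, since smaller ranks only make the hypothesis stronger.

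\emph{Closed form for $\sff$.} Set $c=\cos\ta$, $s=\sin\ta$ and, for $\ta<\frac{\pi}{2}$, $x=\tau\tan\ta$. Then $\sff=c^\rankdf g(x)$ with
\[
g(x)=1+\sum_{\ell\ge2}(\ell-1)\binom{\rankdf}{\ell}x^\ell .
\]
Differentiating the binomial theorem gives $\sum_{\ell\ge0}(\ell-1)\binom{\rankdf}{\ell}x^\ell=\rankdf x(1+x)^{\rankdf-1}-(1+x)^\rankdf$, and the $\ell=0$ term of this sum is $-1$. Hence
\[
g(x)=2-(1+x)^\rankdf+\rankdf x(1+x)^{\rankdf-1},\qquad g'(x)=\rankdf(\rankdf-1)x(1+x)^{\rankdf-2}.
\]
Since $c^{-\rankdf}=(1+\tan^2\ta)^{\rankdf/2}$, the goal becomes
\[
g(x)\le\Bigl(1+\frac{(\rankdf-1)x^2}{\vep}\Bigr)^{\rankdf/2}\quad\text{for all }x\ge0 .
\]
The endpoint $\ta=\frac{\pi}{2}$ is handled directly: there $\sff=(\rankdf-1)\tau^\rankdf=(\rankdf-1)^{1-\rankdf/2}\vep^{\rankdf/2}\le1$.

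\emph{Small $x$: the regime $(\rankdf-1)x\le1$.} Integrating $g'$ gives $g(x)\le1+\frac12\rankdf(\rankdf-1)x^2(1+x)^{\rankdf-2}$, and $(1+x)^{\rankdf-2}\le e$ in this regime. Bernoulli's inequality bounds the right-hand side below by $1+\frac{\rankdf(\rankdf-1)x^2}{2\vep}$. So the inequality holds as soon as $\vep\le1/e$. This is the rigorous version of the Taylor heuristic stated just before the theorem.

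\emph{Large $x$: the regime $(\rankdf-1)x\ge1$.} This is the main obstacle. The naive bound $(1+x)^{\rankdf-1}\le(\rankdf x)^{\rankdf-1}$ loses a factor $\rankdf^\rankdf$, which $\vep^{-\rankdf/2}$ cannot absorb uniformly in $\rankdf$. I would instead compare logarithms. For $x\le1$, use
\[
\log g\le\log\bigl(1+\rankdf^2x^2\bigr)+\rankdf x,
\]
which holds since $(1+x)^{\rankdf-2}\le e^{\rankdf x}$. The right-hand side of the goal has logarithm $\frac{\rankdf}{2}\log\bigl(1+(\rankdf-1)x^2/\vep\bigr)$. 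Writing $x=y/(\rankdf-1)$ with $y\ge1$, the left side is $O(\log(\rankdf y)+y)$ while the right side is roughly $\frac{\rankdf}{2}\log\bigl(1+y^2/(\vep\rankdf)\bigr)$. One checks that the right side dominates once $\vep$ is a small absolute constant, treating $y\lesssim\sqrt{\rankdf}$ and $y\gtrsim\sqrt{\rankdf}$ separately, using $\log(1+u)\ge u/2$ for $u\le1$ in the first case. For $x\ge1$, use $g\le1+\rankdf x(1+x)^{\rankdf-1}\le1+\rankdf(2x)^\rankdf$ against $\bigl((\rankdf-1)x^2/\vep\bigr)^{\rankdf/2}$. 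This is again fine for small $\vep$, because $(\rankdf-1)^{\rankdf/2}x^\rankdf\ge x^\rankdf$ and $\vep^{-\rankdf/2}$ beats $\rankdf2^\rankdf$.

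The delicate part is verifying the intermediate range $y\sim\sqrt{\rankdf}$ with constants independent of $\rankdf$. If this turns out too lossy, I would fall back on the exact form $\sff=2c^\rankdf+(c+\tau s)^{\rankdf-1}\bigl((\rankdf-1)\tau s-c\bigr)$ and maximize in $\ta$ directly.
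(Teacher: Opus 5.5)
Your proof is correct. It shares the paper's reduction, via $\Ld_\ell\le\tau^\ell$ and Proposition~\ref{prop:comass-rough-bound}, to the scalar inequality $\sff(\ta,\tau)\le1$, but proves that inequality by a genuinely different route.

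The paper never passes to $x=\tau\tan\ta$. Instead it:
\begin{enumerate}
\item squares $\sff$;
\item uses a binomial identity to write $\sff^2$ as a homogeneous polynomial of degree $2\rankdf$ in $(\cos\ta,\tau\sin\ta)$ with explicitly bounded coefficients;
\item absorbs the odd-degree terms into the even ones by AM--GM;
\item compares coefficient by coefficient with $1=(\cos^2\ta+\sin^2\ta)^{\rankdf}$.
\end{enumerate}
That avoids any case analysis in $\ta$ and gives a reasonable explicit constant ($\vep=1/26$ comes out of its estimates), at the cost of some binomial bookkeeping.

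Your closed form $g(x)=2-(1+x)^{\rankdf}+\rankdf x(1+x)^{\rankdf-1}$ turns the problem into a one-variable inequality. Your small-$x$ regime is exactly the rigorous version of the Taylor heuristic stated before the theorem, and shows transparently why $1/(\rankdf-1)$ is the right scale. The large-$x$ regime is handled crudely and forces a much smaller constant.

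The step you left as ``one checks'' does go through, but not with the bound $O(\log(\rankdf y)+y)$ you state. At $y=1$ the right side $\frac{\rankdf}{2}\log\bigl(1+\frac{1}{\vep(\rankdf-1)}\bigr)$ stays bounded as $\rankdf\to\infty$ (it tends to $1/(2\vep)$), so a genuine $\log\rankdf$ term would be fatal. What your displayed inequality actually gives is $\log g\le\log(1+4y^2)+2y\le 4y$ for $y\ge1$, because $\rankdf/(\rankdf-1)\le2$. Put $a=\vep(\rankdf-1)$ and split into two cases.
\begin{itemize}
\item If $y^2\le a$, then $\log(1+u)\ge u/2$ gives a right side at least $y^2/(4\vep)\ge 4y$ once $\vep\le 1/16$.
\item If $y^2\ge a$, then $\psi(y)=\frac{\rankdf}{2}\log(1+y^2/a)-4y$ is concave, so it suffices to check the endpoints. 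First, $\psi(\sqrt a)=\frac{\rankdf}{2}\log 2-4\sqrt a\ge0$ for $\vep\le(\log 2/4)^2$, using $\sqrt{\rankdf-1}\le\rankdf/2$. Second, $\psi(\rankdf-1)\ge\frac{\rankdf}{2}\log\frac{1}{\vep}-4\rankdf\ge0$ for $\vep\le e^{-8}$.
\end{itemize}
So $\vep=e^{-8}$ covers all your regimes, including $x\ge1$ and $\ta=\frac{\pi}{2}$.
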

\begin{proof}
The main task is to show that \( \sff(\ta,\sqrt{\frac{\vep}{\rankdf-1}}) \leq 1\).  Throughout the proof, we assume that \(0\leq\ta\leq\frac{\pi}{2}\).

{\textit{Step 1: the square of \(\sff(\ta,\tau)\)}.}
Rewrite \(\sff(\ta,\tau)\) as follows:
\begin{align*}
    \sff(\ta,\tau) &= 2(\cos\ta)^\rankdf + (\cos\ta)^\rankdf\sum_{\ell=0}^\rankdf (\ell-1)\binom{\rankdf}{\ell}(\tau\tan\ta)^\ell \\
    &= 2(\cos\ta)^\rankdf + (\cos\ta)^\rankdf \bigl((\rankdf-1)\tau\tan\ta-1\bigr)(1+(\tau\tan\ta))^{\rankdf-1} ~.
\end{align*}

Since
\begin{align*}
    &\quad \bigl((\rankdf-1)(\tau\tan\ta)-1\bigr)^2 (1+(\tau\tan\ta))^{2\rankdf-2} \\
    &= \bigl((\rankdf-1)^2(\tau\tan\ta)^2 - 2(\rankdf-1)(\tau\tan\ta) + 1\bigr)^2 \sum_{\ell=0}^{2\rankdf-2}\binom{2\rankdf-2}{\ell}(\tau\tan\ta)^\ell
\end{align*}
and
\begin{align*}
    (\rankdf-1)^2\binom{2\rankdf-2}{\ell-2} - 2(\rankdf-1)\binom{2\rankdf-2}{\ell-1} + \binom{2\rankdf-2}{\ell} &= (\ell-1)\binom{2\rankdf}{\ell}\bigl(\frac{\ell \rankdf}{2(2\rankdf-1)} - 1\bigr) ~,
\end{align*}
we have
\begin{align*}
    &\quad (\cos\ta)^{2\rankdf} \bigl((\rankdf-1)\tau\tan\ta-1\bigr)^2(1+(\tau\tan\ta))^{2\rankdf-2} \\
    &= \sum_{\ell = 0}^{2\rankdf} \tau^\ell \Bigl[ (\ell-1)\binom{2\rankdf}{\ell} \bigl(\frac{\ell\rankdf}{2(2\rankdf -1)} - 1\bigr) \Bigr](\cos\ta)^{2\rankdf-\ell}(\sin\ta)^\ell
\end{align*}
It follows that
\begin{align*}
    &\quad \bigl( \sff(\ta,\tau) \bigr)^2 \\
    &= 4(\cos\ta)^{2\rankdf} + 4\sum_{\ell=0}^\rankdf(\ell-1)\binom{\rankdf}{\ell}\tau^\ell(\cos\ta)^{2\rankdf-\ell}(\sin\ta)^\ell + \Bigl[ \sum_{\ell=0}^\rankdf(\ell-1)\binom{\rankdf}{\ell}\tau^\ell(\cos\ta)^{\rankdf-\ell}(\sin\ta)^\ell \Bigl]^2 \\
    &= (\cos\ta)^{2\rankdf} + \sum_{\ell = 2}^\rankdf \tau^\ell \Bigl[ (\ell-1)\binom{2\rankdf}{\ell} \bigl(\frac{\ell\rankdf}{2(2\rankdf -1)} - 1\bigr) + 4(\ell-1)\binom{\rankdf}{\ell} \Bigr](\cos\ta)^{2\rankdf-\ell}(\sin\ta)^\ell \\
    &\qquad + \sum_{\ell = \rankdf+1}^{2\rankdf} \tau^\ell \Bigl[ (\ell-1)\binom{2\rankdf}{\ell} \bigl(\frac{\ell\rankdf}{2(2\rankdf -1)} - 1\bigr) \Bigr](\cos\ta)^{2\rankdf-\ell}(\sin\ta)^\ell
\end{align*}

{\textit{Step 2: an upper bound for \((\sff(\ta,\tau))^2\)}.}
Since \(\ell\geq 2\), \(\binom{\rankdf}{\ell} \leq \frac{1}{2^\ell}\binom{2\rankdf}{\ell} \leq \frac{1}{4}\binom{2\rankdf}{\ell}\), and
\begin{align*}
    \binom{2\rankdf}{\ell} \bigl(\frac{\ell\rankdf}{2(2\rankdf -1)} - 1\bigr) + 4\binom{\rankdf}{\ell}
    &\leq \frac{\ell\rankdf}{2(2\rankdf -1)} \binom{2\rankdf}{\ell} \leq \frac{\ell}{3} \binom{2\rankdf}{\ell} ~.
\end{align*}
It follows that
\begin{align} \label{eqn:f-square-estimate1}
    \bigl( \sff(\ta,\tau) \bigr)^2 &\leq (\cos\ta)^{2\rankdf} + \sum_{\ell=2}^{2\rankdf} \tau^\ell \frac{(\ell-1)\ell}{3} \binom{2\rankdf}{\ell} (\cos\ta)^{2\rankdf-\ell}(\sin\ta)^\ell ~.
\end{align}
Consider the sum over odd \(\ell\)'s, and apply the Cauchy--Schwarz inequality: 
\begin{align*}
    &\quad \sum_{i=1}^{\rankdf-1} \tau^{2i} \frac{i(2i+1)}{3} \binom{2\rankdf}{2i+1} (\cos\ta)^{2\rankdf-2i-2}(\sin\ta)^{2i} \bigl(2(\cos\ta)(\tau\sin\ta)\bigr) \\
    &\leq \sum_{i=1}^{\rankdf-1} \tau^{2i} \frac{i(2i+1)}{3} \binom{2\rankdf}{2i+1} (\cos\ta)^{2\rankdf-2i-2}(\sin\ta)^{2i} \Bigl(\frac{2i+1}{2\rankdf-2i+1}(\cos\ta)^2 + \frac{2\rankdf-2i+1}{2i+1}\tau^2(\sin\ta)^2 \Bigr) \\
    &= \sum_{i=1}^{\rankdf-1} \tau^{2i} \frac{i(2i+1)}{3} \frac{2\rankdf-2i}{2i+1} \binom{2\rankdf}{2i}\frac{2i+1}{2\rankdf-2i+1} (\cos\ta)^{2\rankdf-2i}(\sin\ta)^{2i} \\
    &\quad + \sum_{j=1}^{\rankdf-1} \tau^{2j+2} \frac{j(2j+1)}{3} \frac{2j+2}{2\rankdf-2j-1} \binom{2\rankdf}{2(j+1)} \frac{2\rankdf-2j+1}{2j+1} (\cos\ta)^{2\rankdf-2j-2}(\sin\ta)^{2j+2} \\
    &= \sum_{i=1}^{\rankdf-1} \tau^{2i} \frac{i(2i+1)}{3} \Bigl[1 - \frac{1}{2\rankdf-2i+1}\Bigr] \binom{2\rankdf}{2i} (\cos\ta)^{2\rankdf-2i}(\sin\ta)^{2i} \\
    &\quad + \sum_{i=2}^{\rankdf} \tau^{2i} \frac{i(2i-2)}{3} \Bigl[1 + \frac{2}{2\rankdf-2i+1}\Bigr] \binom{2\rankdf}{2i} (\cos\ta)^{2\rankdf-2i}(\sin\ta)^{2i} \\
    &\leq \sum_{i=1}^{\rankdf-1} \tau^{2i} i^2 \binom{2\rankdf}{2i} (\cos\ta)^{2\rankdf-2i}(\sin\ta)^{2i} + \sum_{i=2}^{\rankdf} \tau^{2i} 2i^2 \binom{2\rankdf}{2i} (\cos\ta)^{2\rankdf-2i}(\sin\ta)^{2i} \\
    &\leq \sum_{j=1}^{\rankdf} \tau^{2j} 3j^2 \binom{2\rankdf}{2j} (\cos\ta)^{2\rankdf-2j}(\sin\ta)^{2j} ~.
\end{align*}

Together with \eqref{eqn:f-square-estimate1},
\begin{align}
    \bigl( \sff(\ta,\tau) \bigr)^2 &\leq (\cos\ta)^{2\rankdf} + \sum_{j=1}^{\rankdf} \tau^{2j} \frac{(2j-1)2j}{3} \binom{2\rankdf}{2j} (\cos\ta)^{2\rankdf-2j}(\sin\ta)^{2j} \notag \\
    &\quad + \sum_{i=1}^{\rankdf-1} \tau^{2i+1} \frac{2i(2i+1)}{3} \binom{2\rankdf}{2i+1} (\cos\ta)^{2\rankdf-2i-1}(\sin\ta)^{2i+1} \notag \\
    &\leq (\cos\ta)^{2\rankdf} + \sum_{j=1}^{\rankdf} \tau^{2j} \Bigl[\frac{(2j-1)2j}{3} + 3j^2\Bigr] \binom{2\rankdf}{2j} (\cos\ta)^{2\rankdf-2j}(\sin\ta)^{2j} ~. \label{eqn:f-square-estimate2}
\end{align}

{\textit{Step 3: bounding \((\sff(\ta,\tau))^2\)} by \(1\).}
Since
\begin{align*}
    1 &= (\cos^2\ta + \sin^2\ta)^\rankdf = (\cos\ta)^{2\rankdf} + \sum_{j=1}^{\rankdf} \binom{\rankdf}{j}(\cos\ta)^{2\rankdf-2j}(\sin\ta)^{2j} ~,
\end{align*}
it suffices to show that there exists an \(\vep > 0\) such that
\begin{align*}
    \frac{\vep^j}{(\rankdf-1)^j}\frac{13}{3}j^2\binom{2\rankdf}{2j} &\leq \binom{\rankdf}{j} ~.
\end{align*}
We compute
\begin{align*}
    \frac{\vep^j}{(\rankdf-1)^j}\frac{13}{3}j^2\binom{2\rankdf}{2j} \Bigl[\binom{\rankdf}{j}\Bigr]^{-1}
    &= \frac{\vep^j}{(\rankdf-1)^j} \frac{13}{3} j^2 \prod_{k=1}^j \frac{2\rankdf-2j+2k-1}{2k-1} \\
    &= \vep^j \frac{13}{3} \Bigl(\prod_{k=1}^j\frac{2\rankdf-2j+2k-1}{p-1}\Bigr) \frac{j^2}{\prod_{\ell=1}^j (2\ell-1)} \\
    &\leq \vep^j \frac{13}{3} \bigl(3\cdot 2^{j-1}\bigr)\cdot2 ~,
\end{align*}
and it is easy to see that we can choose sufficiently small \(\vep>0\) such that the above expression is no greater than \(1\).
\end{proof}

\section{Lipschitz Solution to the Minimal Graph System} \label{sec:Lipschitz}

Note that the above discussions work perfectly well for \(F = (f_1,\cdots,f_m)\) being \(C^2\).  In this section, we study \(\Ta(F)\) for \emph{locally Lipschitz} \(F\).  We say that \emph{\(F\) satisfies the minimal graph system \eqref{eqn:minimal-standard} weakly} if for \(\af = 1,\ldots,m\),
\begin{align} \label{eqn:weakly-minimal}
    \int_{\Om}\sum_{j,k=1}^n \sqrt{g}(g^{-1})^{jk} (\pl_k f_\af) (\pl_j\vph)\,\dd x_1\cdots\dd x_n &= 0
\end{align}
for any smooth \(\vph:\Om\to\BR\) of compact support.  Geometrically, \eqref{eqn:weakly-minimal} means that \(\Sm\), the graph of \(F\), is a critical point of the volume functional with respect to (compactly supported) \emph{outer variations}, i.e.\ variations in the \(F\)-direction (equivalently, in the \(\BR^m\)-directions).  When \(F\) is \(C^2\), \eqref{eqn:weakly-minimal} is equivalent to the vanishing of the mean curvature vector, and hence \(\Sm\) is critical with respect to \emph{any} variations.  In other words, when \(F\) is \(C^2\), \eqref{eqn:weakly-minimal} is equivalent to
\begin{align} \label{eqn:stationary}
    \left\{
        \begin{aligned}
            \sum_{k=1}^{n} \pl_k\bigl(\sqrt{g}\,(g^{-1})^{jk}\bigr) &=0
            \qquad \text{for } j=1,\ldots,n ~, \\
            \sum_{j,k=1}^{n} \pl_j\bigl(\sqrt{g}\,(g^{-1})^{jk}\,(\pl_k f_\af)\bigr) &= 0
            \qquad \text{for } \af=1,\ldots,m ~.
        \end{aligned}
    \right.
\end{align}
The first line of \eqref{eqn:stationary} corresponds to the criticality of the graph with respect to \emph{inner variations} of the volume functional.  However, when \(F\) is only locally Lipschitz, it is not known whether \eqref{eqn:weakly-minimal} implies \eqref{eqn:stationary} (weakly).  Note that satisfying \eqref{eqn:stationary} weakly is equivalent to \(\Sm\) being stationary (see \cite{Simon-83}*{Section 16}).  A conjecture of Lawson and Osserman \cite{Lawson-Osserman-77}*{Conjecture 2.1} asserts that for locally Lipschitz maps, being outer critical and being stationary are equivalent.

Now, suppose that \(F\) is locally Lipschitz and satisfies the minimal graph system weakly.
It follows that the right-hand side of \eqref{form:Theta-g} is a weakly closed differential form (with the coefficient functions in \(L^\infty_{\text{loc}}(\Om\times\BR^m)\)).  In other words, the right-hand side of \eqref{form:Theta-g} defines a locally real flat cochain on \(\Om\times\BR^m\); see \cite{Federer-74}*{4.6} and \cite{Federer-69}*{4.1.19}.  It is convenient to abuse notation and denote this locally real flat cochain by \(\Ta(F)\).

Denote by \(T_F\) the locally\footnote{\(T_F\) needs not have finite mass.} integral current associated with the graph of \(F\).  If the comass of the right-hand side of \eqref{form:Theta-g} is \(1\) almost everywhere on \(\Om\times\BR^m\), it follows from \cite{Federer-69}*{\({\mathbf{F}}(\af)\) on p.377 and \({\mathbf{M}}(\phi)\) on p.358} that \(T_F\) is a locally mass minimizing current.  More precisely, let \(W\subset\Om\times\BR^m\) be an open set with \(\ol{W}\subset\Om\times\BR^m\), and let \(S\) be an \((n+1)\)-integral current in \(\Om\times\BR^m\) with \(\spt S\subset W\).  Since \(\Ta(F)\) is weakly closed, \(\Ta(F)(\pl S) = 0\), and hence
\begin{align*}
    \mathbf{M}(T_F \mathbin{\llcorner} W) &= \Ta(F)(T_F \mathbin{\llcorner} W) = \Ta(F)(T_F \mathbin{\llcorner} W + \pl S) \leq \mathbf{M}(T_F \mathbin{\llcorner} W + \pl S) ~.
\end{align*}

This yields current-theoretic versions of Theorems~\ref{thm:2-dilation-crude} and \ref{thm:2-dilation-refined}.
\begin{prop} \label{prop:Lipschitz-mass-minimizing}
    Let \(F: \Om\subset\BR^n\to\BR^m\) be a locally Lipschitz weak solution to the minimal graph system.  Denote \(\esssup_{\bx\in\Om}\bigl(\rank \dd F|_{\bx}\bigr)\) by \(\rankdf\).   Suppose one of the following holds:
    \begin{enumerate}
        \item \(\rankdf \leq 1\);
        \item \(\rankdf \geq 2\), and the singular values of \(\dd F\) satisfy \(\ld_j\ld_k \leq \frac{1}{(\rankdf-1)^2}\) a.e.;
        \item \(\rankdf \geq 2\), and the singular values of \(\dd F\) satisfy \(\ld_j\ld_k \leq \frac{\vep}{\rankdf-1}\) a.e., where \(\vep>0\) is given by Theorem~\ref{thm:2-dilation-refined}.
    \end{enumerate}
    Then, the current associated with the graph of \(F\) is locally mass minimizing.
\end{prop}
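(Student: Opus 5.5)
The plan is to reduce the statement to the comass computation of Section~\ref{sec:comass} and the current-theoretic argument sketched just before the proposition. Concretely: (a) check that $\Ta(F)$ is a weakly closed locally real flat cochain; (b) check that its comass is at most one almost everywhere; (c) run the Federer comparison displayed above.

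For step (a), $F$ is locally Lipschitz, so $\dd F$ exists a.e. and is locally bounded. The coefficients of the right-hand side of \eqref{form:Theta-g} are then $L^\infty_{\text{loc}}$ functions of $x$ only. The $*1$ coefficient depends only on $x$, so, as in the proof of Theorem~\ref{thm:calibration}, the distributional exterior derivative reduces to
\[
-\sum_{\af}\dd y_\af\w \dd\bigl[(\sum_{j,k}\sqrt{g}(g^{-1})^{jk}\pl_kf_\af\,\pl_{x_j})\lrcorner(*1)\bigr].
\]
Testing this against compactly supported forms on $\Om\times\BR^m$ and integrating out the $y$-variables (Fubini) gives exactly the integrals in \eqref{eqn:weakly-minimal}, with test functions $\vph(x)=\int\phi(x,y)\,\dd y$. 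Hence $\dd\Ta(F)=0$ weakly, and $\Ta(F)$ defines a locally real flat cochain by \cite{Federer-69}*{4.1.19}.

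Step (b) is pointwise linear algebra at a.e.\ $x$ where $\dd F$ exists. At such a point the algebraic identities \eqref{form:Theta-SVD}, \eqref{form:Theta-h} and \eqref{form:Theta-g} hold, since they use no derivatives of $\dd F$. So the form agrees with \eqref{form:Theta-tangent-normal} built from the singular values $\ld_j(x)$, and only the local rank matters.
\begin{itemize}
\item In case (1) the rank is at most one a.e., and the comass equals one, as noted at the start of Section~\ref{sec:comass}.
\item In cases (2) and (3), at a point of local rank $r\le\rankdf$, Proposition~\ref{prop:comass-rough-bound} bounds the comass by $\max_\ta\sff(\ta,\tau)$ with $r$ in place of $\rankdf$. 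Here $\tau^2=1/(\rankdf-1)^2\le 1/(r-1)^2$, respectively $\tau^2=\vep/(\rankdf-1)\le\vep/(r-1)$.
\end{itemize}
One detail needs care: $\sff$ is monotone in $\tau$ because its coefficients $(\ell-1)\binom{r}{\ell}$ are nonnegative for $\ell\ge2$. Therefore the bound $\sff\le1$ proved in Theorems~\ref{thm:2-dilation-crude} and \ref{thm:2-dilation-refined} applies at every point with local rank $r\ge2$. Points with $r\le1$ have comass one.

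For step (c), the comass is at most one a.e. in $\Om\times\BR^m$. By construction, $\Ta(F)$ evaluates to $\operatorname{vol}_\Sm$ on the approximate tangent planes of the graph, which exist $\mathcal H^n$-a.e. on the Lipschitz graph by Rademacher. Hence $\Ta(F)(T_F\llcorner W)=\mathbf M(T_F\llcorner W)$.

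The main obstacle is pairing an $L^\infty$ cochain with the $n$-rectifiable current $T_F$, whose support is Lebesgue-null in $\Om\times\BR^m$. An a.e.\ comass bound on the ambient space does not trivially control the form on $\spt T_F$. I would handle this with Federer's theory of flat cochains, where the comass of a flat cochain is an essential supremum and the inequalities $\Ta(F)(R)\le\mathbf M(R)$ hold for all rectifiable currents $R$ (\cite{Federer-69}*{4.1.19, p.358, p.377}).

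Alternatively, mollify $\Ta(F)$ in $y$ only. Its coefficients are $y$-independent, so this changes nothing, and mollifying in $x$ yields smooth closed forms $\Ta_\dt$ with comass at most one by convexity of comass. Then $\Ta_\dt(T_F\llcorner W)\to\mathbf M(T_F\llcorner W)$ by dominated convergence on the graph parametrization $x\mapsto(x,F(x))$, because $\Ta_\dt(x)$ evaluated on the graph's tangent plane converges a.e.\ in $x$. Since $\Ta_\dt$ is closed, $\Ta_\dt(\pl S)=0$, and
\[
\mathbf M(T_F\llcorner W)\le\mathbf M(T_F\llcorner W+\pl S)
\]
follows, giving local mass minimization.
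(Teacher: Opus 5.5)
Your proposal is correct and follows the paper's argument. You get weak closedness of \eqref{form:Theta-g} from the weak minimal graph system, apply the pointwise comass bounds of Section~\ref{sec:comass} almost everywhere (your monotonicity-in-$\tau$ remark neatly covers points where the local rank is below $\rankdf$), and run the Federer comparison $\mathbf{M}(T_F\llcorner W)=\Ta(F)(T_F\llcorner W+\pl S)\le\mathbf{M}(T_F\llcorner W+\pl S)$. Your mollify-in-$x$ alternative is a worthwhile addition: it gives a self-contained proof of the identity $\Ta(F)(T_F\llcorner W)=\mathbf{M}(T_F\llcorner W)$, which the paper asserts directly even though $T_F$ is carried by a Lebesgue-null set.
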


Proposition~\ref{prop:Lipschitz-mass-minimizing} implies the following corollary, which confirms the Lawson--Osserman conjecture \cite{Lawson-Osserman-77}*{Conjecture 2.1} when the map satisfies the \(2\)-dilation condition. .

\begin{cor} \label{LO}
    For a locally Lipschitz weak solution \(F: \Om\subset\BR^n\to\BR^m\) to the minimal graph system that satisfies the \(2\)-dilation conditions of Proposition~\ref{prop:Lipschitz-mass-minimizing}, the graph of $F$ is stationary. Moreover, \(F\) is smooth.
\end{cor}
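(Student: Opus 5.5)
The argument has two steps. First we pass from mass minimality to stationarity of the graph. Second we use regularity theory to show that $F$ is smooth.

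\emph{Stationarity.} By Proposition~\ref{prop:Lipschitz-mass-minimizing}, the locally integral current $T_F$ associated with the graph is locally mass minimizing in $\Om\times\BR^m$. Consider a compactly supported smooth vector field $X$ on $\Om\times\BR^m$ with flow $\phi_t$. For each such $X$, the current $(\phi_t)_\# (T_F\mathbin{\llcorner} W) - T_F\mathbin{\llcorner}W$ is a boundary $\pl S_t$ by the homotopy formula, with $\spt S_t\subset W$ for a fixed relatively compact $W$ and $|t|$ small. Minimality then gives $\mathbf{M}((\phi_t)_\# T_F\mathbin{\llcorner}W)\ge \mathbf{M}(T_F\mathbin{\llcorner}W)$. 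Hence $t=0$ is a minimum of the first-variation function, and $\delta T_F(X)=0$.

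Since $F$ is Lipschitz, $T_F$ has multiplicity one and its associated varifold is the rectifiable varifold of the graph. Its first variation computed against $X$ is the weak form of \eqref{eqn:stationary}. Indeed, taking $X$ tangent to $\BR^n$ gives the inner-variation equations, and taking $X$ in the $\BR^m$-directions gives the outer ones, following the computation in \cite{Simon-83}*{Section 16}. Thus the graph is stationary, which is exactly the Lawson--Osserman conclusion.

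\emph{Smoothness.} The graph is a stationary integral varifold of multiplicity one with no boundary in $\Om\times\BR^m$. We apply Allard's regularity theorem at every point. Because $F$ is Lipschitz, at each point $(\bx,F(\bx))$ every tangent cone is supported in a graph with bounded slope. Moreover, the density is $1$ everywhere: this follows from monotonicity, together with the facts that tangent cones are area-minimizing (by minimality) and multiplicity one, and that graphical Lipschitz cones have density bounded by the area ratio.

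The main obstacle is exactly this density-one claim, i.e.\ ruling out a singular tangent cone. I would argue it using the calibration itself. Blow-ups of $T_F$ at a point are graphs of the Lipschitz maps $F_r(\bx)=(F(\bx_0+r\bx)-F(\bx_0))/r$. These still satisfy the minimal graph system weakly and the same $2$-dilation bounds almost everywhere, since those conditions are scale-invariant. The $2$-dilation bounds pass to weak-$*$ limits of $\dd F_r$ only in a weak sense. To avoid this issue, I would instead use compactness of minimizing currents: the blow-up limit is an area-minimizing cone which is a Lipschitz graph.

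Next, I would compare mass via $\Theta$ with the plane, exploiting that the density of a minimizing graphical cone is at most the density of its projection onto $\BR^n$ times $\sqrt{1+L^2}^{\,n}$. This bound is finite, but it is not yet $1$. If this comparison fails, I would fall back on the Lipschitz graph structure directly. A Lipschitz graph has an approximate tangent plane almost everywhere, so the density equals $1$ $\mathcal{H}^n$-a.e. Allard's theorem then gives an open dense set of regular points in which the graph is smooth. The density is upper semicontinuous and equals $1$ on a dense set, and for Lipschitz graphs I expect it to be $1$ at every point, since a tangent cone is itself a multiplicity-one graph over all of $\BR^n$. Applying Allard everywhere then shows that $F$ is $C^{1,\alpha}$, and Schauder bootstrapping on \eqref{eqn:minimal-graph} upgrades this to smoothness.
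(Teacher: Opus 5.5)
Your stationarity step is correct and is essentially the paper's argument; the paper just quotes the fact that locally mass-minimizing currents are stationary (Simon, Lemma 33.2). The gap is in the smoothness step. There you never use the $2$-dilation hypothesis, and without it the density-one claim you need is false.

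For a stationary cone, the mass ratio equals the vertex density at every scale. So by Allard, a tangent cone of density one is a plane, and ``density one at every point'' is equivalent to ``every tangent cone is flat.'' Lipschitz graphicality and multiplicity one, even together with area-minimality, do not give this. Take the Lawson--Osserman cone, the graph of $x\mapsto\frac{\sqrt{5}}{2}|x|\,\eta(x/|x|)$ with $\eta:S^3\to S^2$ the Hopf map. It is an entire Lipschitz minimal graph in $\mathbb{R}^7$ that is not a plane. Harvey and Lawson showed it is coassociative, hence area-minimizing, and its vertex density exceeds one.

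Your semicontinuity argument also runs the wrong way. Upper semicontinuity gives $\theta_V(p)\ge\limsup_{q\to p}\theta_V(q)$, so density one on a dense set only yields $\theta_V(p)\ge 1$ at the remaining points. That is exactly what happens at the vertex of the cone above. The comparison via the form $\Ta(F)$ gives, as you note, only a bound of order $(1+L^2)^{n/2}$.

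The missing ingredient is a Bernstein theorem, applied to the tangent cone. The paper argues as follows. At a putative singular point $p$, every tangent cone is an entire Lipschitz minimal graph satisfying the same $2$-dilation bound. By the Bernstein theorem of Wang (Theorem A of the 2003 paper, whose blow-down and dimension-reduction proof works for entire Lipschitz graphs) and Jing--Yang, the cone is an affine plane. Hence the density at $p$ is one, and Allard's theorem makes $p$ regular.

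The Lawson--Osserman cone is consistent with this. Its differential has singular values $\sqrt{5},\sqrt{5},\frac{\sqrt{5}}{2}$, so $\ld_1\ld_2=5$ violates the hypothesis.

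Your worry that the $2$-dilation bound passes to the blow-up limit only weakly can be handled rather than avoided. Since the blow-ups are minimizing, the compactness theorem gives convergence of the mass measures, hence convergence of the associated varifolds. The tangent planes of the graphs of $F_r$ lie in the compact set of $n$-planes that are graphs of linear maps of norm at most $L$. On that set, $\ld_1\ld_2\le c$ is a closed condition, since $\ld_1\ld_2$ is the norm of $\Lambda^2\dd F$. So the limit varifold is supported on planes satisfying the bound, the tangent cone satisfies it almost everywhere, and the Bernstein theorem applies.
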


\begin{proof}
    The first assertion follows from the fact that a locally mass-minimizing current is stationary (see for instance \cite{Simon-83}*{Lemma 33.2}).
    
    For the smoothness of \(F\), suppose that \(\Sm\) has a singular point \(p\).  Since \(F\) is locally Lipschitz, any tangent cone of \(\Sm\) at \(p\) is a minimal graph of an entire, Lipschitz function, satisfying the same \(2\)-dilation condition.  According to the Bernstein theorem\footnote{The proof of \cite{Wang-03}*{Theorem A} uses blow-down and Federer's dimension reduction, and works for the minimal graph of an entire, Lipschitz function.} (\cite{Wang-03}*{Theorem A} and \cite{Jing-Yang-2021}*{Theorem 1.3}), the tangent cone is an affine \(n\)-plane.  By the Allard Regularity Theorem \cite{Allard-72}, \(p\) is a smooth point of \(\Sm\).
\end{proof}


\end{document}